\documentclass[a4paper, 12pt]{article}

\usepackage[sort&compress]{natbib}
\bibpunct{(}{)}{;}{a}{}{,} 

\usepackage{amsthm, amsmath, amssymb, mathrsfs, multirow, url, subfigure}
\usepackage{graphicx} 
\usepackage{ifthen} 
\usepackage{amsfonts}
\usepackage[usenames]{color}
\usepackage{fullpage}



\theoremstyle{plain} 
\newtheorem{thm}{Theorem}

\newtheorem{lem}{Lemma}
\newtheorem*{thm0}{Theorem 5$^\prime$}

\theoremstyle{definition}

\theoremstyle{remark}
\newtheorem{remark}{Remark}
\newtheorem{ex}{Example}

\newcommand{\RR}{\mathbb{R}}

\newcommand{\M}{\mathcal{M}}

\newcommand{\E}{\mathsf{E}}

\newcommand{\prob}{\mathsf{P}}

\newcommand{\eps}{\varepsilon}
\renewcommand{\phi}{\varphi}

\newcommand{\bigmid}{\; \bigl\vert \;}

\newcommand{\nm}{\mathsf{N}}
\newcommand{\bet}{\mathsf{Beta}}

\newcommand{\bin}{\mathsf{Bin}}

\newcommand{\chisq}{\mathsf{ChiSq}}


\title{Empirical Bayes posterior concentration in sparse high-dimensional linear models}
\author{
Ryan Martin\footnote{As of August 2016, RM is affiliated with North Carolina State University, {\tt rgmarti3@ncsu.edu}.} \qquad Raymond Mess \\
Department of Mathematics, Statistics, and Computer Science \\
University of Illinois at Chicago \\
{\tt (rgmartin, rmess1)@uic.edu} \\
\mbox{} \\
Stephen G. Walker \\
Department of Mathematics \\
University of Texas at Austin \\
{\tt s.g.walker@math.utexas.edu}
}
\date{\today}

\begin{document}

\maketitle 


\begin{abstract}
We propose a new empirical Bayes approach for inference in the $p \gg n$ normal linear model.  The novelty is the use of data in the prior in two ways, for centering and regularization.  Under suitable sparsity assumptions, we establish a variety of concentration rate results for the empirical Bayes posterior distribution, relevant for both estimation and model selection.  Computation is straightforward and fast, and simulation results demonstrate the strong finite-sample performance of the empirical Bayes model selection procedure.   

\smallskip

\emph{Keywords and phrases:} Data-dependent prior; fractional likelihood; minimax rate; regression; variable selection.
\end{abstract}

\section{Introduction}
\label{S:intro}

In this paper, we consider the Gaussian linear regression model, given by
\begin{equation}
\label{eq:reg}
Y = X \beta + \eps, 
\end{equation}
where $Y$ is a $n \times 1$ vector of response variables, $X$ is a $n \times p$ matrix of predictor variables, $\beta$ is a $p \times 1$ vector of slope coefficients, and $\eps$ is a $n \times 1$ vector of iid $\nm(0,\sigma^2)$ random errors.  Recently, there has been considerable interest in the high-dimensional case, where $p \gg n$, driven primarily by challenging applications.  Indeed, in genetic studies, where the response variable corresponds to a particular observable trait, the number of subjects, $n$, may be of order $10^3$, while the number of genetic features, $p$, in consideration can be of order $10^5$.  Despite the large number of features, usually only a few have a genuine association with the trait.  For example, the \citet{wellcome2007} has confirmed that only seven genes have a non-negligible association with Type~I diabetes.  Therefore, it is reasonable to assume that $\beta$ is sparse, i.e., only a few non-zero entries.  

Given the practical importance of the high-dimensional regression problem, there is now a substantial body of literature on the subject.  In the frequentist setting, a variety of methods are available based on minimizing loss functions, equipped with a penalty on the complexity of the model.  This includes the lasso \citep{tibshirani1996}, the smoothly clipped absolute deviation \citep{fanli2001}, the adaptive lasso \citep{zou2006}, and the Dantzig selector \citep{candes.tao.2007, james.radchenko.2009, james.radchenko.lv.2009}.  \citet{fan.lv.2010} give a selective overview of these and other frequentist methods.  From a Bayesian perspective, popular methods for variable selection in high-dimensional regression include stochastic search variable selection \citep{george.mccullogh.1993} and the methods based on spike-and-slab priors \citep{ishwaran.rao.2005.aos, ishwaran.rao.2005.jasa}.  These methods and others are reviewed in \citet{clydegeorge2004} and \citet{heaton.scott.2009}.  More recently, \citet{bondell.reich.2012}, \citet{johnson.rossell.2012}, and \citet{narisetty.he.2014} propose Bayesian variable selection methods and establish model selection consistency.  

Any Bayesian approach to the regression problem \eqref{eq:reg} yields a posterior distribution on the high-dimensional parameter $\beta$.  It is natural to ask under what conditions will the $\beta$ posterior distribution concentrate around the true value at an appropriate or optimal rate.  Recently, \citet{castillo.schmidt.vaart.2014} show that, with a suitable Laplace-like prior for $\beta$, similar to those in \citet{park.casella.2008}, and under conditions on the design matrix $X$, the posterior distribution concentrates around the truth at rates that match those for the corresponding lasso estimator \citep[e.g.,][]{buhlmann.geer.book}.  These results leave room for improvement in at least two directions; first, the rates associated with the lasso estimator are not optimal, so a break from the Laplace priors (and perhaps even the standard Bayesian setup itself) is desirable; second, and perhaps most importantly, posterior computation with these inconvenient non-conjugate priors is expensive and non-trivial.  In this paper, we develop a new approach, motivated by computational considerations, which leads to improvements in both directions, simultaneously.  

Towards a model that leads to more efficient computation, it is natural to consider a conjugate normal prior for $\beta$.  However, Theorem~2.8 in \citet{castillo.vaart.2012} says that if the prior has normal tails, then the posterior concentration rates can be suboptimal, motivating a departure from the somewhat rigid Bayesian framework.  Following \citet{martin.walker.eb}, we consider a new empirical Bayes approach, motivated by the very simple idea that the tails of the prior are irrelevant as long as its center is chosen informatively.  So, our proposal is to use the data to provide an informative center for the normal prior for $\beta$, along with an extra regularization step to prevent the posterior from tracking the data too closely.  Details of our proposed empirical Bayes model are presented in Section~\ref{S:model}.  It turns out that this new empirical Bayes posterior is both easy to compute and has desirable asymptotic concentration properties.  Section~\ref{S:theory} presents a variety of concentration rate results for our empirical Bayes posterior.  For example, under almost no conditions on the model or design matrix, a concentration rate relative to prediction error loss is obtained which is, at least in some cases, minimax optimal; the optimal rate can be achieved in all cases, but at a cost (see Remark~\ref{re:minimax}).  Furthermore, we provide a model selection consistency result which says that, under optimal conditions, the empirical Bayes posterior can asymptotically identify those truly non-zero coefficients in the linear model.  Our approach has some similarities with the exponential weighting methods in, e.g., \citet{rigollet.tsybakov.2011, rigollet.tsybakov.2012} and \citet{ariascastro.lounici.2014}; in fact, ours can be viewed as a generalization of these approaches, defining a full posterior that, when suitably summarized, corresponds essentially to their estimators.  In Section~\ref{S:numerical} we propose a simple and efficient Markov chain Monte Carlo method to sample from our empirical Bayes posterior, and we present several simulation studies to highlight both the computational speed the superior finite-sample performance of our method compared to several others in terms of model selection.  Finally Section~\ref{S:discuss} gives a brief a discussion, the key message being that we get provable posterior concentration results, optimal in a minimax sense in some cases, fast and easy computation, and strong finite-sample performance.  Lengthy proofs and some auxiliary results are given in the Appendix.

\section{The empirical Bayes model}
\label{S:model}

\subsection{The prior}
\label{SS:prior}

Here, and in the theoretical analysis in Section~\ref{S:theory}, we take the error variance $\sigma^2$ to be known, as is often done \citep[e.g.,][]{rigollet.tsybakov.2012, castillo.schmidt.vaart.2014}.  Techniques for estimating $\sigma^2$ in the high-dimensional case are available; see Section~\ref{S:numerical}.  To specify a prior for $\beta$ that incorporates sparsity, we decompose $\beta$ as $(S,\beta_S)$, where $S \subset \{1,\ldots,p\}$ denotes the ``active set'' of variables, $S=\{j: \beta_j \neq 0\}$, and $\beta_S$ is the $|S|$-vector containing the particular non-zero values.  Based on this decomposition, we can specify the prior for $\beta$ in two steps: a prior for $S$ and then a prior for $\beta_S$, given $S$.  

First, the prior $\pi(S)$ for the model $S$ decomposes as follows:
\begin{equation}
\label{eq:prior.decomp}
\pi(S) = \textstyle\binom{p}{s}^{-1} \, f_n(s), \quad s=0,1,\ldots,p, \quad s=|S|,
\end{equation}
where $f_n(s)$ is a probability mass function on the size $|S|$ of $S$.  That is, we assign a prior distribution $f_n(s)$ on the model size and then, given the size, put a uniform prior on all models of the given size.  Some conditions on $f_n(s)$ will be required for suitable posterior concentration.  In particular, we assume that $f_n(s)$ is supported on $\{0,1,\ldots,R\}$, not on $\{0,1,\ldots,p\}$, where $R \leq n$ is the the rank of the matrix $X$; see, also, \citet{jiang2007}, \citet{abramovich.grinshtein.2010}, \citet{rigollet.tsybakov.2012}, and \citet{ariascastro.lounici.2014}.  That is, 
\begin{equation}
\label{eq:prior.support}
f_n(s) = 0 \quad \text{for all $s=R+1,\ldots,p$}. 
\end{equation}
Our primary motivation for imposing this constraint is that in practical applications, the true value of $s$; i.e. $s^\star=|S^\star|$, is typically much smaller than $R$.  Even in the ideal case where $S^\star$ is known, if $|S^\star| > R$, then quality estimation of the corresponding parameters is not possible.  Moreover, models containing a large number of variables can be difficult to interpret.  Therefore, since having no more variables than samples in the fixed-model case is a reasonable assumption, we do not believe that restricting the support of our prior for the model size is a strong condition.  

Second, for the conditional prior on $\beta_S$, given $S$ that satisfies $|S| \leq R$, we propose to employ the available distribution theory for the least squares estimator $\widehat\beta_S$.  Specifically, we take the prior for $\beta_S$, given $S$, as 
\begin{equation}
\label{eq:conditional.prior}
\beta_S \mid S \sim \nm_{|S|}\bigl( \widehat\beta_S, \gamma^{-1}(X_S^\top X_S)^{-1} \bigr).
\end{equation}
Here, $X_S$ is the matrix filled with columns of $X$ corresponding to $S$, and $\gamma > 0$ is a tuning parameter, to be specified.  This is reminiscent to Zellner's $g$-prior \citep[e.g.,][]{zellner1986}, except that it is centered at the least squares estimator; see Section~\ref{SS:likelihood} for more on this data-dependent prior centering.  To summarize, our proposed prior $\Pi$ for $\beta$ is given by 
\begin{equation}
\label{eq:prior}
\Pi(d\beta) = \sum_{S: |S| \leq R} \nm_{|S|}\bigl( d\beta_S \bigmid \widehat\beta_S, \gamma^{-1}(X_S^\top X_S)^{-1} \bigr) \, \delta_0(d\beta_{S^c}) \, \pi(S). 
\end{equation}
Following \citet{martin.walker.eb}, we refer to this data-dependent prior as an empirical prior; see Section~\ref{SS:posterior}.  By restricting $|S| \leq R$, we can be sure that the least squares estimator $\widehat\beta_S$ is available, along with the usual distribution theory.  In our implementation, $\gamma^{-1}$ will be large, which means that the conditional prior for $\beta_S$ is rather diffuse, so the dependence on the data, through $\widehat\beta_S$, is not overly strong.

Obviously, to properly define the conditional prior for $\beta_S$, we implicitly assume that $X_S^\top X_S$ is non-singular for all subsets $S$ with $|S| \leq R$.  This is only for simplicity, however, since the theory in Section~\ref{S:theory} goes through without this assumption at the cost of making computations more difficult.

\subsection{The likelihood function}
\label{SS:likelihood}

For the likelihood function, write $L_n(\beta) = \nm_n(Y \mid X\beta, \sigma^2 I)$ as the $n$-dimensional Gaussian density at $Y$, with mean $X\beta$, covariance matrix proportional to the identity matrix, and treated as a function of $\beta$.  One unique feature of our approach so far is the centering of the (conditional) prior on the least squares estimator, which is greedy, in some sense.  To prevent the posterior from tracking the data too closely, the second feature of our proposed approach is that we introduce a fractional power $\alpha \in (0,1)$ on the likelihood.  That is, instead of $L_n(\beta)$, our likelihood will be $L_n(\beta)^\alpha$; see \citet{martin.walker.eb}.  Other authors have advocated the use of a fractional likelihood, including \citet{barron.cover.1991}, \citet{walker.hjort.2001}, \citet{zhang2006}, \citet{jiang.tanner.2008}, \citet{dalalyan.tsybakov.2008}, and \citet{grunwald.ommen.2014}, but these papers have different foci and none include a data-dependent (conditional) prior centering.  In fact, we feel that this combination of centering and fractional likelihood regularization (see Section~\ref{SS:posterior}) is a powerful tool that can be used for a variety of high-dimensional problems.  

Our analysis in what follows does not go through for the genuine Bayes case, corresponding to $\alpha=1$, but $\alpha$ can be arbitrarily close to 1.  Clearly, for finite-samples, the numerical differences between results for $\alpha \approx 1$ and for $\alpha=1$ are negligible.  

\subsection{The posterior distribution}
\label{SS:posterior}

Given the prior $\Pi$ for $\beta$ and the fractional likelihood, we form an empirical Bayes posterior distribution, denoted by $\Pi^n$, for $\beta$ using the standard Bayesian update.  That is, for $B$ a measurable subset of $\RR^p$, we have 
\begin{equation}
\label{eq:post0}
\Pi^n(B) = \frac{\int_B L_n(\beta)^\alpha \,\Pi(d\beta)}{\int_{\RR^p} L_n(\beta)^\alpha \, \Pi(d\beta)}. 
\end{equation}
Computation of this empirical Bayes posterior will be discussed in Section~\ref{S:numerical}.

We interpret ``empirical Bayes'' loosely---if the prior depends on data, then the corresponding posterior is empirical Bayes.  The combination of a prior, data-dependent or not, with a fractional likelihood via Bayes formula can also be understood from this empirical Bayes point of view.  Indeed,  
\[ L_n(\beta)^\alpha \, \Pi(d\beta) = L_n(\beta) \, \frac{\Pi(d\beta)}{L_n(\beta)^{1-\alpha}}, \]
i.e., the Bayes combination of a fractional likelihood with a prior is equivalent to a Bayes combination of the original likelihood function with a data-dependent prior.  As \citet{walker.hjort.2001} explain, rescaling the prior by a portion of the likelihood helps to protect from possible inconsistencies by penalizing those parameter values that ``track the data too closely.''  {Our proposal is obviously very different from the traditional empirical Bayes approach.  As stated in Section~\ref{S:intro}, our goal is simply to construct a data-dependent distribution for $\beta$ that is easy to compute and also has optimal concentration properties.  As a guide, we have followed the familiar prior-to-posterior updating, but added a new twist, and we will demonstrate in Sections~\ref{S:theory}--\ref{S:numerical} that our proposed empirical Bayes posterior distribution \eqref{eq:post0} does, indeed, achieve the stated objectives. }

\section{Posterior concentration rates}
\label{S:theory}

\subsection{Setup}
\label{SS:setup}

Before getting into details about the concentration rates, we first want to clarify what is meant by asymptotics in this context.  There is an implicit triangular array setup, i.e., for each $n$, the response vector $Y^n = (Y_1^n,\ldots,Y_n^n)^\top$ is modeled according to \eqref{eq:reg} with the $n \times p$ design matrix $X^n = ((X_{ij}^n))$, of rank $R \leq n$, which we take to be deterministic but depending on $n$, and vector of coefficients $\beta^n = (\beta_1,\ldots,\beta_p)^\top$.  When $n$ is increased, more data is available so, even though there are more variables to contend with (since $p \gg n$), there is hope that something about the true $\beta^n$ can be learnt, provided that it is sufficiently sparse.  In what follows, we will use the standard notation in \eqref{eq:reg} which is less cumbersome but hides the triangular array formulation.  It is important to keep in mind, however, that, throughout our analysis, $p$, $R$, and $s^\star$ depend implicitly on $n$.  


We make some minimal standing assumptions. First, without loss of generality, we can assume that $s^\star \leq R \leq n \ll p$.  No other assumptions concerning $n$, $p$, $R$, and $s^\star$ will be required.  The results below also hold for all fixed tuning parameters $\alpha \in (0,1)$ and $\gamma > 0$; see Section~\ref{SS:implementation} for guidance on the practical choice of $(\alpha,\gamma)$.  For the design matrix $X$, there is a standing simplifying assumption that we shall make.  In particular, we assume that $X_S$ is full-rank for each $S$ satisfying $|S| \leq R$.  This assumption holds, for example, if $X$ satisfies the ``sparse Riesz condition with rank $n$'' discussed in \citet{zhang.huang.2008} and \citet{chen.chen.2008}.  It is possible, however, to remove this condition, but it requires a modification of the empirical Bayes model.  Indeed, if the prior $\pi$ for $S$ only puts positive mass on those $S$ such that $X_S$ is full-rank, and if $X_{S^\star}$ is full-rank, then the theoretical results presented below follow similarly.  The drawback for adjusting the prior for $S$ in this way is additional computational cost, i.e., the less-than-full-rank models must be identified and removed by zeroing out the prior mass.  We opt here to keep things simple by making the full-rank assumption.

\subsection{A preliminary result}
\label{SS:prelim}

Let $B$ be a generic event for $\beta \in \RR^p$.  Our empirical Bayes posterior probability of the event $B$ in \eqref{eq:post0} can be rewritten as 
\begin{equation}
\label{eq:post}
\Pi^n(B) = \frac{\int_B R_n(\beta, \beta^\star)^\alpha \, \Pi(d\beta)}{\int R_n(\beta, \beta^\star)^\alpha \,\Pi(d\beta)}, 
\end{equation}
where $R_n(\beta,\beta^\star) = L_n(\beta)/L_n(\beta^\star)$ is the likelihood ratio.  Let $D_n$ denote the denominator in the above display, i.e., $D_n = \int R_n(\beta,\beta^\star)^\alpha \, \Pi(d\beta)$.  The next result, which will be useful throughout our analysis, gives a sure lower bound on $D_n$.  

\begin{lem}
\label{lem:denominator}
There exists $c=c(\alpha, \gamma, \sigma^2) > 0$ such that $D_n \geq \pi(S^\star)e^{-c|S^\star|}$.  
\end{lem}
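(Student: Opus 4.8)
The plan is to discard all but one term in the mixture prior \eqref{eq:prior} and then to evaluate the resulting integral in closed form. Write $A^\star = X_{S^\star}^\top X_{S^\star}$, which is positive definite by the standing full-rank assumption, and recall that the least squares estimator in the true model is $\widehat\beta_{S^\star} = (A^\star)^{-1} X_{S^\star}^\top Y$. Keeping only the $S = S^\star$ component of $\Pi$ gives
\[ D_n \ \geq\ \pi(S^\star) \int R_n(\beta,\beta^\star)^\alpha \, \nm_{|S^\star|}\bigl(d\beta_{S^\star} \bigmid \widehat\beta_{S^\star},\, \gamma^{-1}(A^\star)^{-1}\bigr), \]
where on the support of this measure $\beta$ is supported on $S^\star$ (as is $\beta^\star$), so that $X\beta = X_{S^\star}\beta_{S^\star}$ and $X\beta^\star = X_{S^\star}\beta^\star_{S^\star}$.

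The first key step is the orthogonal (Pythagorean) decomposition of the residual sum of squares: because the normal equations give $X_{S^\star}^\top(Y - X_{S^\star}\widehat\beta_{S^\star}) = 0$, for any $|S^\star|$-vector $b$ we have $\|Y - X_{S^\star} b\|^2 = \|Y - X_{S^\star}\widehat\beta_{S^\star}\|^2 + (b - \widehat\beta_{S^\star})^\top A^\star (b - \widehat\beta_{S^\star})$. Applying this with $b = \beta_{S^\star}$ and with $b = \beta^\star_{S^\star}$ and subtracting, the term $\|Y - X_{S^\star}\widehat\beta_{S^\star}\|^2$ cancels, and since $L_n(\beta) \propto \exp\{-\tfrac{1}{2\sigma^2}\|Y - X\beta\|^2\}$,
\[ R_n(\beta,\beta^\star)^\alpha \ =\ \exp\Bigl\{ -\tfrac{\alpha}{2\sigma^2}(\beta_{S^\star} - \widehat\beta_{S^\star})^\top A^\star (\beta_{S^\star} - \widehat\beta_{S^\star}) + \tfrac{\alpha}{2\sigma^2}(\beta^\star_{S^\star} - \widehat\beta_{S^\star})^\top A^\star(\beta^\star_{S^\star} - \widehat\beta_{S^\star}) \Bigr\}. \]
The second exponent is nonnegative and free of $\beta$, so it can simply be dropped to obtain a valid lower bound.

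It then remains to compute a Gaussian integral. Changing variables to $u = \beta_{S^\star} - \widehat\beta_{S^\star}$, which has law $\nm_{|S^\star|}(0, \gamma^{-1}(A^\star)^{-1})$ under the retained prior component, the lower bound becomes $\pi(S^\star)$ times $\int \exp\{-\tfrac{\alpha}{2\sigma^2}\, u^\top A^\star u\}\, \nm_{|S^\star|}(du \mid 0, \gamma^{-1}(A^\star)^{-1})$. This is a standard multivariate normal integral: writing out the density and combining the two quadratic forms, the $\det(A^\star)$ factor from the prior normalizing constant cancels exactly against the one arising from the integration, leaving $\bigl(\gamma\sigma^2 / (\gamma\sigma^2 + \alpha)\bigr)^{|S^\star|/2}$, which is why the final bound depends on the dimension only through the exponent $|S^\star|$. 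Hence $D_n \geq \pi(S^\star)\,\bigl(\gamma\sigma^2/(\gamma\sigma^2+\alpha)\bigr)^{|S^\star|/2} = \pi(S^\star)\,e^{-c|S^\star|}$ with $c = \tfrac12\log\{1 + \alpha/(\gamma\sigma^2)\} > 0$, a constant depending only on $(\alpha,\gamma,\sigma^2)$.

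There is no real obstacle here; the computation is elementary. The only points requiring care are the bookkeeping in the Pythagorean identity — in particular checking that the cross term vanishes via the normal equations — and the finiteness of the Gaussian integral, which holds precisely because $A^\star$ is positive definite; this is the one place the full-rank assumption on $X_{S^\star}$ is used. Were that assumption dropped, the same argument would go through on the column space of $X_{S^\star}$, with $|S^\star|$ replaced by $\rank(X_{S^\star})$, matching the remark in Section~\ref{SS:setup}.
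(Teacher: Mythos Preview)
Your proof is correct and follows essentially the same route as the paper's: restrict to the $S=S^\star$ term, evaluate the Gaussian integral (the paper just says ``direct calculation'' where you spell out the Pythagorean decomposition), drop the nonnegative exponent $\tfrac{\alpha}{2\sigma^2}\|X_{S^\star}(\widehat\beta_{S^\star}-\beta^\star_{S^\star})\|^2$, and arrive at the identical constant $c=\tfrac12\log(1+\alpha/(\gamma\sigma^2))$.
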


\begin{proof}
$D_n$ is an average of a non-negative $S$-dependent quantity with respect to $\pi(S)$.  This average is clearly greater than the quantity for $S=S^\star$ times $\pi(S^\star)$.  That is, 
\begin{align*}
D_n & > \pi(S^\star) \int R_n(\beta, \beta^\star)^\alpha \nm(\beta_{S^\star} \mid \hat\beta_{S^\star}, \gamma^{-1}(X_{S^\star}^\top X_{S^\star})^{-1}) \, d\beta_{S^\star} \\
& = \pi(S^\star) \int e^{-\frac{\alpha}{2\sigma^2}\{\|Y-X_{S^\star}\beta_{S^\star}\|_2^2 - \|Y-X_{S^\star}\beta_{S^\star}^\star\|_2^2\}} \nm(\beta_{S^\star} \mid \hat\beta_{S^\star}, \gamma^{-1}(X_{S^\star}^\top X_{S^\star})^{-1}) \,d\beta_{S^\star}.
\end{align*}
Direct calculation shows that the lower bound above equals 
\[ \pi(S^\star) e^{\frac{\alpha}{2\sigma^2}\|X_{S^\star}(\hat\beta_{S^\star}-\beta_{S^\star}^\star)\|_2^2} \Bigr(1 + \frac{\alpha}{\gamma \sigma^2} \Bigr)^{-|S^\star|/2}. \]
Using the trivial bound $\|\cdot\|_2 \geq 0$ on the norm in the exponent, the proof is complete if we let $c = \frac12\log\bigl( 1 + \frac{\alpha}{\gamma \sigma^2} \bigr)$, which is clearly positive.  
\end{proof}

\subsection{Prediction loss}
\label{SS:prediction}

We now present a result characterizing the concentration rate of the posterior distribution for the mean $X\beta$.  Set 
\begin{equation}
\label{eq:prediction.event}
B_{\eps_n} = \{\beta \in \RR^p: \|X(\beta-\beta^\star)\|_2^2 > \eps_n\}, 
\end{equation}
where $\eps_n$ is a positive sequence to be specified.  Since this loss involves the $X$ matrix, the notion of convergence we are considering here is related to prediction.  Different loss functions will be considered in Section~\ref{SS:other}.  As discussed in \citet{buhlmann.geer.book}, e.g., their equation (2.8), $\eps_n$ proportional to $s^\star \log p$ corresponds to the convergence rate for the lasso estimator.  Intuitively, if $S^\star$ were known, then the best rate for the prediction error would be $s^\star$, so the logarithmic term acts as a penalty for having to also deal with the unknown model.  

Let $N_n$ be the numerator for the posterior probability of $B_{\eps_n}$, as in \eqref{eq:post}, i.e., $N_n = \int_{B_{\eps_n}} R_n(\beta,\beta^\star)^\alpha \, \Pi(d\beta)$.  We have the following bound on $N_n$.      

\begin{lem}
\label{lem:numerator}
There exists $d=d(\alpha, \sigma^2) > 0$ and $\phi=\phi(\alpha, \gamma, \sigma^2) > 1$ such that $\E_{\beta^\star}(N_n) \leq e^{-d \eps_n} \sum_{S: |S| \leq R} \phi^{|S|} \pi(S)$, uniformly in $\beta^\star$.  
\end{lem}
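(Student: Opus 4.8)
The plan is to pass the model-size sum outside everything and bound each model's contribution separately. By the product structure of the empirical prior \eqref{eq:prior}, and since $X\beta=X_S\beta_S$ on the support $\{\beta_{S^c}=0\}$ of its $S$-th component,
\[
N_n \;=\; \sum_{S:\,|S|\le R}\pi(S)\,N_{n,S},\qquad
N_{n,S}\;=\;\int_{\{\,\|X_S\beta_S-X\beta^\star\|_2^2>\eps_n\,\}} R_n(\beta,\beta^\star)^\alpha\,\nm_{|S|}\!\bigl(d\beta_S\bigmid\widehat\beta_S,\gamma^{-1}(X_S^\top X_S)^{-1}\bigr),
\]
so it suffices to produce $d=d(\alpha,\sigma^2)>0$ and $\phi=\phi(\alpha,\gamma,\sigma^2)>1$ with $\E_{\beta^\star}(N_{n,S})\le e^{-d\eps_n}\phi^{|S|}$ for every $S$ and every $\beta^\star$. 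Fix $S$, let $P_S$ be the orthogonal projector onto the column space of $X_S$ (available because $X_S$ is full rank when $|S|\le R$), and write $Q_S=I-P_S$. The first step is to trade the constraining event for an exponential tilt: for any $t>0$,
\[
\mathbf 1\{\|X_S\beta_S-X\beta^\star\|_2^2>\eps_n\}\;\le\;e^{-t\eps_n}\,e^{t\|X_S\beta_S-X\beta^\star\|_2^2},
\]
so $N_{n,S}\le e^{-t\eps_n}\widetilde N_{n,S}$, where $\widetilde N_{n,S}$ is the same integral with the indicator replaced by $e^{t\|X_S\beta_S-X\beta^\star\|_2^2}$.

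All integrands being nonnegative, Tonelli's theorem lets me compute $\E_{\beta^\star}(\widetilde N_{n,S})$ — an expectation over $Y\sim\nm_n(X\beta^\star,\sigma^2 I)$ — by exchanging it with the $\beta_S$-integral, and the combined integrand is the exponential of a quadratic form in $(\beta_S,Y)$, which I reduce by integrating out $\beta_S$ and then $Y$. For the first integration I use $R_n(\beta,\beta^\star)^\alpha=\exp\{-\tfrac{\alpha}{2\sigma^2}(\|Y-X_S\beta_S\|_2^2-\|Y-X\beta^\star\|_2^2)\}$, the Pythagorean identity $\|Y-X_S\beta_S\|_2^2=\|Q_SY\|_2^2+\|X_S\beta_S-P_SY\|_2^2$, and $(\beta_S-\widehat\beta_S)^\top X_S^\top X_S(\beta_S-\widehat\beta_S)=\|X_S\beta_S-P_SY\|_2^2$ (since $X_S\widehat\beta_S=P_SY$); changing variables to $u=X_S\beta_S$ over the column space of $X_S$ — the $|X_S^\top X_S|^{1/2}$ Jacobian cancels the prior normalizer — and completing the square in $u$ leaves a Gaussian integral equal to $(\gamma/(b-2t))^{|S|/2}$ with $b=\alpha/\sigma^2+\gamma$, provided $t<b/2$. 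For the second integration I split $Y=P_SY+Q_SY$ into the independent pieces of dimensions $|S|$ and $n-|S|$: after the terms in $\|P_SX\beta^\star\|_2^2$ and the cross terms cancel, the $P_SY$-dependence collapses to $\exp\{\lambda\|P_S(Y-X\beta^\star)\|_2^2\}$ for an explicit $\lambda=\lambda(t,\alpha,\gamma,\sigma^2)$, with expectation $(1-2\lambda\sigma^2)^{-|S|/2}$ whenever $2\lambda\sigma^2<1$.

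The decay, and the need for $\alpha<1$, sit in the $Q_SY$-part: its expectation is $\exp\{(t-\tfrac{\alpha(1-\alpha)}{2\sigma^2})\|Q_SX\beta^\star\|_2^2\}$, which is $\le 1$ for \emph{every} $\beta^\star$ exactly when $t\le\alpha(1-\alpha)/(2\sigma^2)$ — and this room exists only because $\alpha<1$; at $\alpha=1$ the coefficient of the uncontrolled, possibly huge bias $\|Q_SX\beta^\star\|_2^2$ cannot be made nonpositive, which is why the genuine-Bayes case is excluded. Taking $t=d:=\alpha(1-\alpha)/(2\sigma^2)$ makes that factor exactly $1$; one verifies that with this $t$ the constraints $t<b/2$ and $2\lambda\sigma^2<1$ hold for every $\gamma>0$, and that $(\gamma/(b-2t))^{|S|/2}(1-2\lambda\sigma^2)^{-|S|/2}$ simplifies to $(1-\alpha)^{-|S|}$. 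Hence $\E_{\beta^\star}(N_{n,S})\le e^{-d\eps_n}(1-\alpha)^{-|S|}$, and summing against $\pi(S)$ gives the claim with $\phi=1/(1-\alpha)>1$.

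The main obstacle is the two layers of Gaussian bookkeeping, together with checking that a single fixed $t>0$ can meet all three requirements at once — $\beta_S$-integral convergence, a nonpositive bias coefficient, and convergence of the $|S|$-dimensional projection integral — with the last two hinging on the strict slack that $\alpha<1$ provides. The explicit values of $d$ and $\phi$ above are incidental; any admissible $t$ yields suitable constants.
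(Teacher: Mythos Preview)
Your argument is correct, and it proceeds by a genuinely different route than the paper's. The paper's proof decouples the two data-dependent factors in the integrand via H\"older's inequality: for $h>1$ with $h\alpha<1$ it bounds $\E_{\beta^\star}[R_n^\alpha\cdot\nm(\beta_S\mid\widehat\beta_S,\cdot)]$ by the product of a Renyi-divergence factor $e^{-\frac{\alpha(1-h\alpha)}{2\sigma^2}\|X(\beta_{S+}-\beta^\star)\|^2}$ and an $L^q$-moment of the Gaussian prior density (computed via a non-central chi-square MGF), then bounds the Renyi factor on the event $B_n(S)$ and integrates the remaining piece over $\beta_S$. This yields $d=\alpha(1-h\alpha)/(2\sigma^2)$ and a $\phi$ depending on the H\"older exponent $q$ as well as $\gamma$ and $\sigma^2$.

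You instead apply a Chernoff tilt to the indicator, then integrate the joint Gaussian in $(\beta_S,Y)$ directly using the orthogonal decomposition $Y=P_SY+Q_SY$. This is more elementary in that it avoids the auxiliary H\"older exponent altogether, and the constants you obtain are cleaner: $d=\alpha(1-\alpha)/(2\sigma^2)$ (strictly larger than the paper's $d$, since there $h>1$) and $\phi=(1-\alpha)^{-1}$, which happens not to depend on $\gamma$ or $\sigma^2$. The paper's H\"older approach is slightly more modular---it would transfer more readily to a non-Gaussian conditional prior---whereas your direct computation exploits conjugacy to the hilt and, as you note, makes transparent exactly where and why $\alpha<1$ is needed (the coefficient of the uncontrolled bias $\|Q_SX\beta^\star\|^2$ must be nonpositive). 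Both proofs arrive at the same inequality for the lemma; yours simply does so with sharper explicit constants.
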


\begin{proof}
See Appendix~\ref{SS:proof.num}.  
\end{proof}

\ifthenelse{1=1}{}{
\begin{remark}
\label{re:hyper}
{\color{red} Following the intuition in Section~\ref{SS:prior}, we take $\alpha$ close to 1.  This makes the constant $d$ in Lemma~\ref{lem:numerator} small.  Also, if $\alpha$ is close to 1, then $\gamma$ can be chosen small enough so that the constant $\phi$ is not too big, i.e., close to $2^{1/2}$.  Justification for these statements can be found in the proof of Lemma~\ref{lem:numerator}; see, also, Section~\ref{S:numerical}.  } 
\end{remark}
}

To bound the posterior probability of $B_{\eps_n}$, let $b_n = \pi(S^\star) e^{-c s^\star}$.  Since $D_n \geq b_n$, surely, by Lemma~\ref{lem:denominator}, we have 
\[ \Pi^n(B_{\eps_n}) = \frac{N_n}{D_n} \cdot 1(D_n \geq b_n) + \frac{N_n}{D_n} \cdot 1(D_n < b_n) \leq \frac{N_n}{b_n}. \]
Taking expectation and plugging in the bound in Lemma~\ref{lem:numerator} gives 
\begin{align*}
\E_{\beta^\star}\{\Pi^n(B_{\eps_n})\} & \leq e^{c|S^\star|-d\eps_n} \, \frac{1}{\pi(S^\star)} \sum_S \phi^{|S|} \pi(S) \\
& = e^{cs^\star-d\eps_n} \, \frac{\binom{p}{s^\star}}{f_n(s^\star)} \sum_{s=0}^R \phi^s f_n(s); 
\end{align*}
which holds uniformly in $\beta^\star$ with $|S_{\beta^\star}|=s^\star$.  Then the empirical Bayes concentration rate $\eps_n = \eps_n(p, R, s^\star)$ is such that the above upper bound vanishes.  A first conclusion is that $\eps_n$ must satisfy $s^\star = o(\eps_n)$.  More precisely, if we set
\[ \zeta_n = \zeta_n(p, R, s^\star) = \frac{\binom{p}{s^\star}}{f_n(s^\star)} \sum_{s=0}^R \phi^s f_n(s), \]
then the rate $\eps_n$ satisfies 
\begin{equation}
\label{eq:growth}
\log\zeta_n = O(\eps_n), \quad \text{as $n \to \infty$}.
\end{equation}
This amounts to a condition on the prior $f_n$ for $|S|$.  Indeed, \eqref{eq:growth} requires that $f_n$ should be sufficiently concentrated near $s^\star$, so that $f_n(s^\star)$ is not too small and the expectation of $\phi^{|S|}$ with respect to $f_n$ is not too big.  Compare this to the prior support conditions in \citet{ggv2000}, \citet{shen.wasserman.2001}, and \citet{walker2007}.    

We are now ready to state and prove our first main concentration rate result.  To keep the statement of the theorem concise, we give an asymptotic convergence result.  However, Theorem~\ref{thm:mean.concentration} and Theorems~\ref{thm:dim}--\ref{thm:selection2} in the upcoming sections, are actually stronger than stated, since the proofs are based on getting explicit fixed-$(n,p,s^\star)$ bounds.

\begin{thm}
\label{thm:mean.concentration}
For any $s^\star \leq R$, if the prior $f_n$ on $|S|$ admits $\zeta_n$ such that \eqref{eq:growth} holds with $\eps_n$, then there exists a constant $M > 0$ such that $\E_{\beta^\star} \{\Pi^n(B_{M\eps_n})\} \to 0$ as $n \to \infty$, uniformly in $\beta^\star$ with $|S_{\beta^\star}| = s^\star$.  
\end{thm}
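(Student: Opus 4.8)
The plan is to combine Lemma~\ref{lem:denominator} and Lemma~\ref{lem:numerator} as in the derivation preceding the theorem statement, and then to make the constant $M$ explicit using the growth condition~\eqref{eq:growth}; the theorem is essentially the formal endpoint of that derivation. Fix any $\beta^\star$ with $|S_{\beta^\star}| = s^\star$, and note that every bound produced below will depend on $\beta^\star$ only through $s^\star$, so that the resulting convergence is automatically uniform over all such $\beta^\star$.

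First I would use the sure lower bound $D_n \geq b_n := \pi(S^\star) e^{-c s^\star}$ from Lemma~\ref{lem:denominator}. Writing $N_n = \int_{B_{M\eps_n}} R_n(\beta,\beta^\star)^\alpha \, \Pi(d\beta)$ for the numerator of $\Pi^n(B_{M\eps_n})$ in~\eqref{eq:post}, this gives, for every realization of the data,
\[ \Pi^n(B_{M\eps_n}) = \frac{N_n}{D_n}\,1(D_n \geq b_n) + \frac{N_n}{D_n}\,1(D_n < b_n) \leq \frac{N_n}{b_n}, \]
because $\{D_n < b_n\}$ is the empty event. Taking $\E_{\beta^\star}$ and applying Lemma~\ref{lem:numerator} with $\eps_n$ replaced throughout by $M\eps_n$ (the constants $d$, $\phi$ there do not depend on the radius) yields
\[ \E_{\beta^\star}\{\Pi^n(B_{M\eps_n})\} \leq \frac{1}{\pi(S^\star)} e^{c s^\star - d M \eps_n} \sum_{S: |S| \leq R} \phi^{|S|}\pi(S) = e^{c s^\star - d M \eps_n}\,\zeta_n, \]
with $\zeta_n = \zeta_n(p,R,s^\star)$ as defined just before the theorem; none of $c$, $d$, $\phi$, or $\zeta_n$ depends on $\beta^\star$ beyond $s^\star$.

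It remains only to absorb the $c s^\star$ term and choose $M$. Taking logarithms, the last right-hand side equals $\exp\{-d M \eps_n + (c s^\star + \log\zeta_n)\}$. Since $\zeta_n \geq \binom{p}{s^\star} \geq (p/s^\star)^{s^\star} \geq 2^{s^\star}$ once $p \geq 2 s^\star$, which holds for all large $n$ under the standing assumption $s^\star \leq R \leq n \ll p$, we have $s^\star \leq \log_2\zeta_n$, hence $c s^\star + \log\zeta_n = O(\log\zeta_n) = O(\eps_n)$ by~\eqref{eq:growth}; say $c s^\star + \log\zeta_n \leq K\eps_n$ for all large $n$, with $K$ not depending on $\beta^\star$. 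Choosing any fixed $M > K/d$ then bounds the exponent by $-(dM - K)\eps_n \to -\infty$, since $\eps_n \to \infty$ (this being implicit in calling $\eps_n$ a rate, and in any event forced by~\eqref{eq:growth} when $s^\star \geq 1$, where already $\zeta_n \geq p \to \infty$). Therefore $\E_{\beta^\star}\{\Pi^n(B_{M\eps_n})\} \to 0$ uniformly over $\beta^\star$ with $|S_{\beta^\star}| = s^\star$, which is the claim.

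The substantive content is not in this argument: it lies in Lemma~\ref{lem:numerator}, whose expectation bound is proved in Appendix~\ref{SS:proof.num}, and to a lesser extent in the exact Gaussian integration behind Lemma~\ref{lem:denominator}. The only point here that needs a little care is the order of the quantifiers---$M$ must be selected \emph{after} the constant $K$ controlling $c s^\star + \log\zeta_n$ has been identified---together with the observation that $K$, and hence $M$, can be taken independently of $\beta^\star$, which holds because each intermediate bound depends on $\beta^\star$ only through $s^\star = |S_{\beta^\star}|$.
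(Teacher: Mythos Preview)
Your proof is correct and follows essentially the same route as the paper: bound $\Pi^n(B_{M\eps_n})$ by $N_n/b_n$ via Lemma~\ref{lem:denominator}, take expectations and apply Lemma~\ref{lem:numerator}, then choose $M$ large enough that the exponent $cs^\star - dM\eps_n + \log\zeta_n \to -\infty$. The only minor variation is that the paper disposes of the $cs^\star$ term by invoking $s^\star = o(\eps_n)$ from the discussion preceding the theorem, whereas you instead derive $s^\star \leq \log_2\zeta_n$ directly from $\zeta_n \geq \binom{p}{s^\star} \geq 2^{s^\star}$ and fold it into the $O(\log\zeta_n)$ bound; both lead to the same choice $M > K/d$.
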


\begin{proof}
By Lemmas~\ref{lem:denominator} and \ref{lem:numerator}, and the growth condition \eqref{eq:growth}, we have that, for large $n$, 
\[ \log \E_{\beta^\star}\{\Pi^n(B_{M\eps_n})\} \leq \Bigl( \frac{cs^\star}{\eps_n} - Md + \frac{\log\zeta_n}{\eps_n} \Bigr) \eps_n. \]
The first term inside the parentheses vanishes since $s^\star = o(\eps_n)$.  Next, under \eqref{eq:growth}, there exists a $K > 0$ such that $(\log\zeta_n)/\eps_n < K$.  So, if we take $M$ such that $Md > K$, then the upper bound above goes to $-\infty$ as $n \to \infty$.  This implies the result.   
\end{proof}

\begin{remark}
\label{re:minimax}
What rates $\eps_n$ are desirable/attainable?  The minimax rate for estimation under this prediction error loss is $\min\{R, s^\star \log(p / s^\star)\}$; see, e.g., \citet{rigollet.tsybakov.2012}.  Note the phase transition between the ordinary [$s^\star \log(p / s^\star) < R$] and the ultra high-dimensional [$s^\star \log(p / s^\star) > R$] regimes. According to Remark~\ref{re:geometric}, an empirical Bayes posterior concentration rate equal to $s^\star \log(p / s^\star)$ obtains for a class of priors on $S$, which is minimax optimal but only in the ordinary high-dimensional regime; this rate is slightly better than those obtained in \citet{ariascastro.lounici.2014} and \citet{castillo.schmidt.vaart.2014}, but see \citet[][Corollary~5.3]{gao.vdv.zhou.2015} for a result comparable to ours in Theorem~\ref{thm:mean.concentration}.  By picking a prior outside this class, in particular, one that puts a little mass on an overly-complex model, the minimax rate can be achieved in both the ordinary and ultra high-dimensional regimes.  There is a price to be paid, however, for this complete minimax rate: the little piece of extra prior mass on the complex model is large  enough to cause problems with the proofs of marginal posterior concentration properties for $S$.  Justification of these claims can be found in {Appendix~\ref{S:claims}}.  Based on these observations, we conjecture that the priors on $S$ that lead to minimax concentration rate under prediction error loss do not lead to desirable model selection properties.  This is intuitively reasonable, since good prediction generally does not require a correctly specified model, but more work is needed to confirm this.  Since we prefer to have a single prior that does well in all aspects, we will not concern ourselves here with attaining the optimal minimax rate in the ultra high-dimensional regime, though we do know how to obtain it.  
\end{remark}

\begin{remark}
\label{re:geometric}
The growth condition \eqref{eq:growth} holds with $\eps_n$ proportional to $s^\star \log(p / s^\star)$, the minimax rate in the ordinary high-dimensional case, if there exists constants $a_1$, $a_2$, $c_1$, $c_2$, $C_1$, and $C_2$ such that $f_n$ satisfies 
\begin{equation}
\label{eq:prior.dim}
C_1\Bigl( \frac{1}{c_1 p^{a_1}} \Bigr)^s \leq f_n(s) \leq C_2 \Bigl( \frac{1}{c_2 p^{a_2}} \Bigr)^s \quad \text{for all $s=0,1,\ldots,R$} 
\end{equation}
The proof of this claim follows from calculations similar to those in Example~\ref{ex:complexity} below.  Assumption~1 in \citet{castillo.schmidt.vaart.2014} implies \eqref{eq:prior.dim}, but our restriction, $|S| \leq R$, allows us to get rates for priors that may not satisfy \eqref{eq:prior.dim}.
\end{remark}

\begin{remark}
\label{re:phi}
Consider the expectation term $\sum_{s=0}^R \phi^s f_n(s)$.  The trivial bound $\phi^R$ could be used in the ultra high-dimensional case where $s^\star \log(p / s^\star) \gg R$.  More generally, if $f_n$ satisfies \eqref{eq:prior.dim}, then the formulas for partial sums of a geometric series reveal that this expectation term is bounded as $n \to \infty$.  In fact, in the examples discussed below, it is easy to confirm that the expectation term is bounded.  Therefore, the rate is determined completely by the prior concentration around $S^\star$. 
\end{remark}
  
Next we identify the rate $\eps_n$ corresponding to several choices of prior $f_n$.  The complexity prior in Example~\ref{ex:complexity}, which is simple and has good properties,  will be our choice of prior in what follows; our proofs Sections~\ref{SS:dimension}--\ref{SS:selection} can be easily modified to cover any $f_n$ that satisfies \eqref{eq:prior.dim}.  

\begin{ex}
\label{ex:complexity}
The complexity prior for the model size $|S|$ in Equation~(2.3) of \citet{castillo.schmidt.vaart.2014} is given by 
\begin{equation}
\label{eq:complexity}
f_n(s) \propto c^{-s} p^{-as}, \quad s=0,1,\ldots,R, 
\end{equation}
where $a$ and $c$ are positive constants.  This prior clearly satisfies the condition \eqref{eq:prior.dim} in Remark~\ref{re:geometric}.  We claim that this complexity prior satisfies \eqref{eq:growth} with $\eps_n = s^\star\log(p / s^\star)$.  To see this, note that $\log f_n(s^\star)$ is lower bounded by
\[ -s^\star \log(c s^{\star a}) -as^\star\log(p / s^\star) = -\Bigl( a + \frac{\log c + a \log s^\star}{\log(p/s^\star)} \Bigr) s^\star \log(p / s^\star). \]
The ratio inside the parentheses above vanishes since $s^\star \ll p$.  Similarly, by Stirling's formula, we have that $\log \binom{p}{s^\star} \leq s^\star \log(p / s^\star)\{1 + o(1)\}$.  Putting these two bounds together, and using the result in Remark~\ref{re:phi}, we can conclude that the complexity prior above yields a posterior concentration rate $s^\star \log(p / s^\star)$.  
\end{ex}

\begin{ex}
\label{ex:beta.binom}
Convergence rates can be obtained for other priors $f_n$.  First, consider a beta--binomial prior for $|S|$, i.e., 
\[ f_n(s) = \int_0^1 \binom{R}{s} w^{R-s}(1-w)^s \, a_n w^{a_n-1} \,dw, \]
which corresponds to a $\bet(a_n,1)$ prior for $W$ and a conditional $\bin(R,1-w)$ prior for $|S|$, given $W=w$.  For $a_n = a R$, for a constant $a > 0$, it can be shown that the corresponding rate $\eps_n$ is proportional to $s^\star \log(p / s^\star)$.  If, on the other hand, $f_n$ is a $\bin(R, R^{-1})$ mass function, then similar calculations show that the concentration rate is $\eps_n = s^\star \log p$, which agrees with the lasso rate in \citet{buhlmann.geer.book}, but falls short of the rates discussed previously. 
\end{ex}

\ifthenelse{1=1}{}{
There are other priors whose corresponding posterior concentration rate is worse than $s^\star \log(p / s^\star)$, if only by a little bit.  The next example illustrates this.  

\begin{ex}
\label{ex:binomial}
Let $f_n$ be a $\bin(R,R^{-1})$ mass function.  Using the same lower and upper bounds on the binomial coefficients as in the previous example, we have 
\begin{align*}
\frac{f_n(s^\star)}{\binom{p}{s^\star}} 
& \geq p^{-s^\star} e^{-s^\star-1} \{1 + o(1)\}. 
\end{align*}
Again, the expectation of $\phi^{|S|}$ is bounded as $n \to \infty$, so we get that $\log\zeta_n = O(s^\star\log p)$.  This matches the rate for the lasso quoted in Equation~(2.8) of \citet{buhlmann.geer.book}, but falls short of the rates in the previous examples.    
\end{ex}
}

\subsection{Effective dimension}
\label{SS:dimension}

Under our proposed prior, the empirical Bayes posterior distribution for $\beta$ is concentrated on an $R$-dimensional subspace of the full $p$-dimensional parameter space.  In the sparse case, where the true $\beta^\star$ has effective dimension $s^\star \leq R\ll p$, it is interesting to ask if the posterior distribution is actually concentrated on a space of dimension close to  $s^\star$.  Below we give an affirmative answer to this question under some conditions.  Such considerations will also be useful in Sections~\ref{SS:other} and \ref{SS:selection}.   

For a given $\Delta$, let $B_n(\Delta) = \{\beta \in \RR^p: |S_\beta| \geq \Delta\}$ be those $\beta$ vectors with no less than $\Delta$ non-zero entries.  We say that the effective dimension of $\Pi^n$ is bounded by $\Delta=\Delta_n$ if the expected posterior probability of $B_n(\Delta)$ vanishes as $n \to \infty$.  Next write 
\[ N_n(\Delta) = \int_{B_n(\Delta)} R_n(\beta,\beta^\star)^\alpha \, \Pi(d\beta), \]
for the numerator of the posterior probability of $B_n(\Delta)$.  

\begin{lem}
\label{lem:dim}
$\E_{\beta^\star}\{N_n(\Delta)\} \leq \sum_{s = \Delta}^R \phi^s f_n(s)$ for all $\beta^\star$. 
\end{lem}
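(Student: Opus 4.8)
The argument parallels the proof of Lemma~\ref{lem:numerator}, only simpler, since the restriction to $B_n(\Delta)$ amounts merely to truncating the sum over models rather than extracting an $e^{-d\eps_n}$ factor. The plan is to substitute the empirical prior \eqref{eq:prior} into $N_n(\Delta)$. Because the conditional prior $\nm_{|S|}(\,\cdot \bigmid \widehat\beta_S, \gamma^{-1}(X_S^\top X_S)^{-1})$ is absolutely continuous, a draw $\beta$ from the $S$th mixture component satisfies $S_\beta = S$ almost surely, so the event $B_n(\Delta) = \{|S_\beta| \geq \Delta\}$ retains precisely the models $S$ with $|S| \geq \Delta$, giving
\[ N_n(\Delta) = \sum_{S:\, \Delta \leq |S| \leq R} \pi(S) \int R_n(\beta,\beta^\star)^\alpha \, \nm_{|S|}\bigl( d\beta_S \bigmid \widehat\beta_S, \gamma^{-1}(X_S^\top X_S)^{-1} \bigr). \]
After using Tonelli's theorem to interchange $\E_{\beta^\star}$ with the sum and the inner integral, the claim follows once we show that, for each fixed $S$ with $|S| \leq R$,
\[ \E_{\beta^\star} \int R_n(\beta,\beta^\star)^\alpha \, \nm_{|S|}\bigl( d\beta_S \bigmid \widehat\beta_S, \gamma^{-1}(X_S^\top X_S)^{-1} \bigr) \leq \phi^{|S|}, \]
with $\phi = \phi(\alpha,\gamma,\sigma^2) > 1$ the same constant as in Lemma~\ref{lem:numerator}.

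This per-model inequality is exactly the bound established in the proof of Lemma~\ref{lem:numerator} (Appendix~\ref{SS:proof.num}) with the $\eps_n$-dependent factor set to $1$, so I would simply cite it. For orientation, the steps are: write $R_n(\beta,\beta^\star)^\alpha = \exp\{-\frac{\alpha}{2\sigma^2}(\|Y - X_S\beta_S\|_2^2 - \|Y - X\beta^\star\|_2^2)\}$ for $\beta$ supported on $S$ and split $\|Y - X_S\beta_S\|_2^2$ by the Pythagorean identity about the least squares fit; perform the Gaussian integral in $\beta_S$, which produces the factor $(1 + \frac{\alpha}{\gamma\sigma^2})^{-|S|/2}$; then take $\E_{\beta^\star}$ over $\eps = Y - X\beta^\star \sim \nm_n(0,\sigma^2 I)$, decomposing $\eps$ into its independent components inside and orthogonal to the column space of $X_S$, so that the residual sum of squares contributes a $\chisq_{|S|}$ moment generating function equal to $(1-\alpha)^{-|S|/2}$ --- finite precisely because $\alpha \in (0,1)$, with $|S|$ degrees of freedom because $\rank(X_S) = |S|$ under the standing full-rank assumption --- while the bias term $\|(I-P_S)X\beta^\star\|_2^2$, with $P_S$ the orthogonal projection onto the column space of $X_S$, enters with a non-positive coefficient after combining with a Gaussian moment generating function and is dropped.

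Finally, I would insert this bound and regroup the sum by model size: since $\sum_{|S| = s}\pi(S) = f_n(s)$ by \eqref{eq:prior.decomp} and $f_n$ vanishes for $s > R$ by \eqref{eq:prior.support},
\[ \E_{\beta^\star}\{N_n(\Delta)\} \leq \sum_{S:\, \Delta \leq |S| \leq R} \phi^{|S|} \pi(S) = \sum_{s=\Delta}^R \phi^s f_n(s), \]
uniformly in $\beta^\star$, since nothing about $\beta^\star$ beyond the discarded bias term is used. I do not expect a genuine obstacle: the whole content is the per-model expectation bound, already proved for Lemma~\ref{lem:numerator}. The only points requiring a little care are the reduction of $\{|S_\beta| \geq \Delta\}$ to a restriction on the models in the prior mixture, which uses absolute continuity of the conditional prior, and keeping the full-rank hypothesis in view so the relevant chi-square has exactly $|S|$ degrees of freedom.
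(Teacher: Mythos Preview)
Your proposal is correct and matches the paper's own proof: both write $N_n(\Delta)$ as a sum over models with $|S|\geq\Delta$, push the expectation inside, and invoke the per-model bound from the proof of Lemma~\ref{lem:numerator} (with the $e^{-d\eps_n}$ factor replaced by the trivial bound $1$) to get $\phi^{|S|}$. One small remark: the ``orientation'' sketch you give (Pythagorean split, integrate in $\beta_S$, then a $\chisq$ moment generating function) is a direct computation rather than the H\"older-based argument actually used in Appendix~\ref{SS:proof.num}, so it would yield a slightly different constant than the paper's $\phi$; since your stated plan is simply to cite Lemma~\ref{lem:numerator}, this is harmless.
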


\begin{proof}
See Appendix~\ref{SS:proof.dim}.  
\end{proof}

We can combine Lemma~\ref{lem:dim} and Lemma~\ref{lem:denominator} to conclude that
\[ \E_{\beta^\star}[ \Pi^n\{B_n(\Delta)\} ] \leq e^{cs^\star} \frac{\binom{p}{s^\star}}{f_n(s^\star)} \sum_{s = \Delta}^R \phi^s f_n(s), \]
uniformly in $\beta^\star$ with $|S_{\beta^\star}| = s^\star$.  Since $\phi > 1$, we have $\sum_s \phi^s f_n(s) > 1$ and, therefore, 
\begin{equation}
\label{eq:dim.bound}
\E_{\beta^\star}[ \Pi^n\{B_n(\Delta)\} ] \leq e^{cs^\star + \log \zeta_n} \sum_{s=\Delta}^R \phi^s f_n(s). 
\end{equation}
So, if the tail of the prior $f_n$ on the model size is sufficiently light, then the posterior probability assigned to models with complexity of order greater than $s^\star$ will be small.  Under the conditions of Theorem~\ref{thm:mean.concentration}, we know the magnitude of $\log\zeta_n$, but here we need additional control on the tails of $f_n$.  

\begin{thm}
\label{thm:dim}
Let $s^\star \leq R$.  If $f_n$ is of the form \eqref{eq:complexity}, then $\E_{\beta^\star}[\Pi^n\{B_n(\Delta_n)\}] \to 0$, holds with $\Delta_n = Cs^\star$, uniformly in $\beta^\star$ with $|S_{\beta^\star}|=s^\star$, i.e., the effective dimension $\Pi^n$ is bounded by $Cs^\star$.  
\end{thm}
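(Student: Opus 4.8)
The plan is to specialize the general bound \eqref{eq:dim.bound} to the complexity prior \eqref{eq:complexity} and to show that, for $\Delta_n = Cs^\star$ with $C$ chosen large, the resulting upper bound decays exponentially in $s^\star\log p$. Write $Z_n = \sum_{t=0}^R c^{-t}p^{-at}$ for the normalizing constant of $f_n$ in \eqref{eq:complexity}; since the $t=0$ term equals $1$ we have $Z_n \geq 1$, hence $f_n(s) \leq c^{-s}p^{-as}$ for every $s$. Because $p \gg n \to \infty$ and $a>0$, the ratio $\rho_n := \phi c^{-1}p^{-a}$ satisfies $\rho_n < 1/2$ for all large $n$, so summing a geometric series gives
\[ \sum_{s=\Delta}^R \phi^s f_n(s) \;\leq\; \sum_{s \geq \Delta} \rho_n^{\,s} \;=\; \frac{\rho_n^{\,\Delta}}{1-\rho_n} \;\leq\; 2\,\rho_n^{\,\Delta}, \]
and therefore $\log\bigl(\sum_{s=\Delta}^R \phi^s f_n(s)\bigr) \leq \log 2 + \Delta\log(\phi/c) - a\,\Delta\log p$. (When $\Delta_n > R$ the event $B_n(\Delta_n)$ carries no $\Pi$-mass, so $\Pi^n\{B_n(\Delta_n)\} \equiv 0$ and there is nothing to prove; thus we may assume $\Delta_n \leq R$, with non-integer values read as ceilings throughout.)

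Next, recall from Example~\ref{ex:complexity}, via \eqref{eq:growth}, that for the complexity prior $\log\zeta_n = O\bigl(s^\star\log(p/s^\star)\bigr)$; in particular there is a constant $K_0$, depending only on $a$, $c$ and the tuning parameters (through $\phi$), with $\log\zeta_n \leq K_0\, s^\star\log p$ for all large $n$. Substituting this and the tail bound above into \eqref{eq:dim.bound} with $\Delta = \Delta_n = Cs^\star$ yields, for all large $n$ and uniformly in $\beta^\star$ with $|S_{\beta^\star}| = s^\star$,
\[ \log \E_{\beta^\star}\bigl[\Pi^n\{B_n(\Delta_n)\}\bigr] \;\leq\; \bigl(K_0 - Ca\bigr)\, s^\star\log p \;+\; O(s^\star), \]
the $O(s^\star)$ term absorbing the linear-in-$s^\star$ contribution of Lemma~\ref{lem:denominator} together with $Cs^\star\log(\phi/c)$ and the additive $\log 2$. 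Choosing $C > K_0/a$ makes the coefficient of $s^\star\log p$ strictly negative, and since $s^\star \geq 1$ together with $p\to\infty$ forces $s^\star\log p \to \infty$ (while $O(s^\star) = o(s^\star\log p)$), the whole bound tends to $-\infty$. Hence $\E_{\beta^\star}[\Pi^n\{B_n(\Delta_n)\}] \to 0$ uniformly in $\beta^\star$ with $|S_{\beta^\star}| = s^\star$, which is the claim.

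The one point requiring care is the calibration of $C$: the exponential saving $-Ca\,s^\star\log p$ coming from the light geometric tail of $f_n$ must dominate not only the $O(s^\star)$ loss from the denominator bound of Lemma~\ref{lem:denominator}, but more significantly the prior-mass penalty $\log\zeta_n$, which is itself of exact order $s^\star\log(p/s^\star)$; this is why $C$ must be taken large relative to $1/a$ (and the implied constant $K_0$) rather than merely larger than $1$. Everything else is a routine geometric-series estimate, and the uniformity over $\beta^\star$ with $|S_{\beta^\star}| = s^\star$ is inherited verbatim from \eqref{eq:dim.bound}. In the degenerate case $s^\star = 0$ one simply replaces $\Delta_n$ by $\Delta_n \vee 1$; then $\log\zeta_n$ stays bounded and the same tail bound still drives the expression to zero.
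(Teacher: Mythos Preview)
Your proof is correct and follows essentially the same route as the paper's: bound the tail $\sum_{s\geq\Delta}\phi^s f_n(s)$ by a geometric series with ratio $\phi/(cp^a)$, plug into \eqref{eq:dim.bound}, and choose $C$ large enough that the $-Ca\,s^\star\log p$ term dominates $cs^\star+\log\zeta_n$. The only substantive difference is in bookkeeping: the paper asserts the threshold $C>a^{-1}$, whereas you carry the constant $K_0$ from the $\log\zeta_n$ bound and require $C>K_0/a$; your version is arguably more honest, since $\log\zeta_n$ actually carries a factor $(1+a)$ in front of $s^\star\log(p/s^\star)$, not $1$. Your explicit handling of the cases $\Delta_n>R$ and $s^\star=0$ is extra care the paper omits.
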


\begin{proof}
Recall that, for this $f_n$, $\log\zeta_n$ is of the order $s^\star \log(p/s^\star)$.  Moreover, for a generic $\Delta$, the summation $\sum_{s=\Delta}^R \phi^s f_n(s)$ is bounded by a partial sum of a geometric series.  In particular, the bound is $O(r^{\Delta+1})$, where $r = \phi/cp^a$ and $a,c$ are in \eqref{eq:complexity}.  In that case, 
\[ r^{\Delta+1} = e^{-(\Delta+1)[a\log p + \log(c/\phi)]}. \]
So, if $\Delta$ is a suitable multiple of $s^\star$, then clearly the $r^{\Delta+1}$ term dominates the $e^{cs^\star + \log\zeta_n}$ term.  In particular, if $\Delta = C s^\star$ with $C > a^{-1}$, then the product on the right-hand side of \eqref{eq:dim.bound} vanishes, proving the claim.
\end{proof}


To summarize, our prior is such that the posterior distribution is supported on models of size no more than $R$.  However, a good prior is one such that the posterior ought to be able to learn the size of the true model that generated the data, which is possibly much less than $R$.  Theorem~\ref{thm:dim} shows that, indeed, if the prior $f_n$ on the model size has sufficiently light tails, controlled by the prior exponent $a > 0$, then the posterior will concentrate on models of size proportional to $s^\star$, the true model size.  We cannot take a $\tilde{s}<s^\star$ to replace $s^\star$ in $Cs^\star$, since we would need
\[ \frac{\tilde{s}}{s^\star} \geq \frac{\log(p/s^\star)}{\log p} \to 1, \]
which confirms this particular point.  Furthermore, we see exactly the effect that the prior exponent $a$ has through the bound $C > a^{-1}$ on the proportionality constant.  So, small $a$ will have the effect of spreading out the posterior to include some large (but not too large) models, while large $a$ will keep the posterior concentrated on small models.  Choosing small $a$ is beneficial in finite-sample studies; see Section~\ref{S:numerical}.

\subsection{Other loss functions}
\label{SS:other}

Theorem~\ref{thm:mean.concentration} concerns the empirical Bayes posterior probability of sets of $\beta$ which are near the true $\beta^\star$ relative to a distance depending on the design matrix $X$.  A natural question is if the empirical Bayes posterior concentrates on neighborhoods of $\beta^\star$ with respect to more other metrics, such as $\ell_1$- and $\ell_2$-norms.  An affirmative answer will require further conditions on $X$ to separate $\beta$ from $X\beta$.  

In the low-dimensional case, with $p < n$, we have
\[ \|X(\beta-\beta^\star)\|_2 \geq \lambda_{\text{min}}(X^\top X)^{1/2} \|\beta-\beta^\star\|_2, \] 
where $\lambda_{\text{min}}(A)$ is the minimum eigenvalue of $A$, which is positive if $A$ is non-singular.  When $p \gg n$, $X$ is not full rank and, therefore, the smallest eigenvalue of $X^\top X$ is zero, making the above inequality trivial and not useful.  However, it is still possible to get something like the displayed inequality.  Towards this, define the function
\begin{equation}
\label{eq:kappa}
\kappa(s) = \kappa_X(s) = \inf_{\beta: 0 < |S_\beta| \leq s} \frac{\|X\beta\|_2}{\|\beta\|_2}, \quad s=1,\ldots,p.
\end{equation}
The quantity $\kappa(s)$ is called the ``smallest scaled sparse singular value of $X$ of dimension $s$,'' similar to the quantity in Equation~(11) of \citet{ariascastro.lounici.2014} and that in Definition~2.3 of \citet{castillo.schmidt.vaart.2014}.  Its main purpose is to facilitate conversion of $\ell_2$-norm concentration results for the mean vector $X\beta$ to $\ell_2$-norm concentration results for $\beta$ itself.  Indeed, a result shown in \citet[][Lemma~1]{ariascastro.lounici.2014} is that a true $\beta^\star \in \RR^p$ with $|S_{\beta^\star}|=s^\star$ is identifiable if and only if 
\begin{equation}
\label{eq:identifiable}
\kappa(2s^\star) > 0. 
\end{equation}
Consequently, $\kappa$ is an important quantity and will appear in Theorem~\ref{thm:beta.concentration} below. One can define quantities analogous to $\kappa$ in order to get concentration results relative to the $\ell_1$- or $\ell_\infty$-norm of $\beta$; see \citet[][Section~2]{castillo.schmidt.vaart.2014}.  

The result presented below will follow almost immediately from Theorem~\ref{thm:mean.concentration} and the definition of $\kappa$.  Indeed, for any $\beta$, we have
\begin{equation}
\label{eq:L2.bound}
\|X(\beta-\beta^\star)\|_2 \geq \kappa(|S_{\beta-\beta^\star}|) \, \|\beta-\beta^\star\|_2. 
\end{equation}
For example, if $\|\beta-\beta^\star\|_2$ is lower-bounded, then so is $\|X(\beta-\beta^\star)\|_2$, for suitable $\kappa$, so a posterior concentration result for the $\ell_2$-norm on $\beta$ should follow from an analogous result for the $\ell_2$ prediction error as in Theorem~\ref{thm:mean.concentration}.  The only obstacle is that the $\kappa$ term on the right-hand depends on the particular $\beta$.  The following result leads to the observation that $\kappa(|S_{\beta-\beta^\star}|)$ can be controlled by a term that depends only on $s^\star$.   

\begin{lem}
\label{lem:kappa}
For any $\beta$ and $\beta^\star$, $\kappa(|S_{\beta-\beta^\star}|) \geq \kappa(|S_\beta| + |S_{\beta^\star}|)$.
\end{lem}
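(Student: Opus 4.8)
The plan is to split the claim into two elementary observations and then chain them. The first is the purely set-theoretic fact that the support of a difference sits inside the union of the two supports; the second is that $\kappa(\cdot)$, as defined in \eqref{eq:kappa}, is non-increasing in its integer argument. Granting these, the lemma is immediate.

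For the first step, I would simply note that $\beta_j \neq \beta_j^\star$ forces either $\beta_j \neq 0$ or $\beta_j^\star \neq 0$, so $S_{\beta-\beta^\star} \subseteq S_\beta \cup S_{\beta^\star}$, and hence $|S_{\beta-\beta^\star}| \leq |S_\beta| + |S_{\beta^\star}|$. For the second step, I would read off directly from \eqref{eq:kappa} that, for integers $s \leq t$, the feasible region $\{\beta: 0 < |S_\beta| \leq s\}$ over which the infimum defining $\kappa(s)$ is taken is contained in the feasible region defining $\kappa(t)$; since an infimum over a larger set is no larger, $\kappa(s) \geq \kappa(t)$. Setting $s = |S_{\beta-\beta^\star}|$ and $t = |S_\beta| + |S_{\beta^\star}|$ and combining the two steps yields $\kappa(|S_{\beta-\beta^\star}|) \geq \kappa(|S_\beta| + |S_{\beta^\star}|)$, as claimed.

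There is essentially no obstacle here; the only point worth a word of care is the degenerate case $\beta = \beta^\star$, where $S_{\beta-\beta^\star} = \emptyset$ and $\kappa(0)$ falls outside the range of \eqref{eq:kappa}. This case is irrelevant for the intended use of the lemma—in Theorem~\ref{thm:beta.concentration} the vector $\beta$ will be separated from $\beta^\star$—but if one insists, it is handled by the convention $\kappa(0) = +\infty$, under which the stated inequality holds trivially.
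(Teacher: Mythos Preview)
Your proof is correct and follows exactly the paper's own argument: the paper's one-line proof simply invokes that $\kappa$ is non-increasing together with $|S_{\beta-\beta^\star}| \leq |S_\beta| + |S_{\beta^\star}|$, and you have merely spelled out both ingredients in more detail. The extra remark on the degenerate case $\beta=\beta^\star$ is a nice touch but, as you note, inessential for the application.
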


\begin{proof}
This follows since $\kappa$ is non-increasing and $|S_{\beta-\beta^\star}| \leq |S_\beta| + |S_{\beta^\star}|$.
\end{proof}

Under our prior formulation, we know that the posterior puts probability~1 on those $\beta$ for which $|S_\beta| \leq R$.  So, if $|S_{\beta^\star}| = s^\star$, then, trivially, $\kappa(|S_{\beta-\beta^\star}|) \geq \kappa(R + s^\star)$.  For better control on the $\kappa$ term in \eqref{eq:L2.bound}, recall that Theorem~\ref{thm:dim} says that the posterior probability of the event $\{|S_\beta| \geq Cs^\star\}$ vanishes as $n \to \infty$.  Therefore, for $C'=C+1$, 
\begin{equation}
\label{eq:kappa.bound}
\kappa(|S_{\beta-\beta^\star}|) \geq \kappa(C's^\star) 
\end{equation}
holds for all $\beta$ in a set with posterior probability approaching 1.  Compare this to Theorem~1 of \citet{castillo.schmidt.vaart.2014}, and also to the corresponding model selection results for frequentist point estimators in, e.g., \citet[][Chap.~7]{buhlmann.geer.book}.  

We are now ready for the concentration rate result with respect to the $\ell_2$-norm loss on the parameter $\beta$ itself.  This time, set 
\[ B_{\delta_n}' = \{\beta \in \RR^p: \|\beta-\beta^\star\|_2^2 > \delta_n\}, \]
where $\delta_n$ is a positive sequence to be specified.  

\begin{thm}
\label{thm:beta.concentration}
For $s^\star \leq \min(n, R)$, suppose that the prior $f_n$ satisfies \eqref{eq:complexity} with exponent $a > 0$, so that Theorem~\ref{thm:mean.concentration} holds with $\eps_n$ equal to $s^\star\log(p/s^\star)$ and Theorem~\ref{thm:dim} holds with $\Delta_n=Cs^\star$, for $C > a^{-1}$.  Then there exists a constant $M$ such that $\E_{\beta^\star}\{\Pi^n(B_{M\delta_n}')\} \to 0$ as $n \to \infty$, uniformly in $\beta^\star$ with $|S_{\beta^\star}|=s^\star$, where 
\[ \delta_n = \frac{s^\star \log(p / s^\star)}{\kappa(C's^\star)^2}, \]
provided that $\kappa(C's^\star) > 0$, where $C' = 1+C > 1 + a^{-1} > 1$.  
\end{thm}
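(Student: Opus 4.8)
The plan is to split the event $B'_{M\delta_n}$ according to the effective dimension of $\beta$ and thereby reduce the ``$\ell_2$-on-$\beta$'' statement to the ``$\ell_2$-on-$X\beta$'' statement already established in Theorem~\ref{thm:mean.concentration}. Concretely, with $B_n(Cs^\star) = \{\beta: |S_\beta| \geq Cs^\star\}$ the effective-dimension event of Section~\ref{SS:dimension}, I would write
\[ \Pi^n(B'_{M\delta_n}) \leq \Pi^n\bigl(B'_{M\delta_n} \cap \{\beta : |S_\beta| < Cs^\star\}\bigr) + \Pi^n\bigl(B_n(Cs^\star)\bigr). \]
Since $f_n$ is of the form \eqref{eq:complexity} and $C > a^{-1}$, Theorem~\ref{thm:dim} gives that the expectation of the second term vanishes as $n \to \infty$, uniformly in $\beta^\star$ with $|S_{\beta^\star}| = s^\star$.

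For the first term, I would show that $B'_{M\delta_n} \cap \{\beta : |S_\beta| < Cs^\star\}$ is contained in the prediction-loss event $B_{M\eps_n}$ of \eqref{eq:prediction.event} with $\eps_n = s^\star\log(p/s^\star)$. Indeed, on $\{|S_\beta| < Cs^\star\}$ we have $|S_{\beta-\beta^\star}| \leq |S_\beta| + s^\star < C's^\star$, so Lemma~\ref{lem:kappa} together with monotonicity of $\kappa$ yields $\kappa(|S_{\beta-\beta^\star}|) \geq \kappa(C's^\star)$; substituting this into the elementary bound \eqref{eq:L2.bound} gives $\|X(\beta-\beta^\star)\|_2 \geq \kappa(C's^\star)\,\|\beta-\beta^\star\|_2$. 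Hence, using $\kappa(C's^\star) > 0$, the condition $\|\beta-\beta^\star\|_2^2 > M\delta_n$ forces $\|X(\beta-\beta^\star)\|_2^2 > \kappa(C's^\star)^2 M\delta_n = M\eps_n$ by the definition of $\delta_n$. Therefore $\Pi^n(B'_{M\delta_n} \cap \{|S_\beta| < Cs^\star\}) \leq \Pi^n(B_{M\eps_n})$, whose expectation tends to $0$ by Theorem~\ref{thm:mean.concentration} once $M$ is taken large enough, uniformly in $\beta^\star$ with $|S_{\beta^\star}|=s^\star$.

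Finally, I would fix $M$ to be at least the constant required in Theorem~\ref{thm:mean.concentration}; then both terms on the right-hand side of the display above have expectations going to $0$ uniformly over $\beta^\star$ with $|S_{\beta^\star}|=s^\star$, which is the claim. The computations are routine; the only point needing care is the $\beta$-dependence of $\kappa(|S_{\beta-\beta^\star}|)$ in \eqref{eq:L2.bound}, and this is exactly what the decomposition via Theorem~\ref{thm:dim} resolves, since restricting to $\{|S_\beta| < Cs^\star\}$ replaces the $\beta$-dependent $\kappa$ term by the fixed quantity $\kappa(C's^\star)$. A minor bookkeeping issue is that a single $M$ must serve both in Theorem~\ref{thm:mean.concentration} and in the set inclusion above, which is immediate because enlarging $M$ only shrinks both events involved.
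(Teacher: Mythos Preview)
Your proposal is correct and follows essentially the same route as the paper: both use the effective-dimension result (Theorem~\ref{thm:dim}) to replace the $\beta$-dependent $\kappa(|S_{\beta-\beta^\star}|)$ in \eqref{eq:L2.bound} by the fixed $\kappa(C's^\star)$, and then appeal to Theorem~\ref{thm:mean.concentration}. The only difference is presentational: the paper first argues informally that $\Pi^n(B'_{M\delta_n}) \leq \Pi^n(B_{M\eps_n})$ modulo an ``extra term'' that is $o(1)$ by Theorem~\ref{thm:dim}, whereas you make that decomposition explicit from the outset via the union bound $\Pi^n(B'_{M\delta_n}) \leq \Pi^n(B'_{M\delta_n}\cap\{|S_\beta|<Cs^\star\}) + \Pi^n(B_n(Cs^\star))$.
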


\begin{proof}
It follows immediately from \eqref{eq:L2.bound} that $\|\beta-\beta^\star\|_2^2 > M\delta_n$ implies 
\[ \|X(\beta-\beta^\star)\|_2^2 > M\kappa(|S_{\beta-\beta^\star}|)^2 \delta_n. \]
By definition of $\delta_n$ and the inequality \eqref{eq:kappa.bound}, this last inequality implies 
\[ \|X(\beta-\beta^\star)\|_2^2 > M s^\star \log(p / s^\star). \]
If we take $M$ as in Theorem~\ref{thm:mean.concentration}, then the event in the above display is exactly $B_{M\eps_n}$.  We have shown that $\Pi^n(B_{M\delta_n}') \leq \Pi^n(B_{M\eps})$.  By Theorem~\ref{thm:mean.concentration}, the expectation of the upper bound vanishes uniformly in $\beta^\star$ as $n \to \infty$, so the proof is almost complete.  The remaining issue to deal with is an extra term in the upper bound for $\Pi^n(B_{M\delta_n}')$ coming from using $\kappa(C's^\star)$ in place of $\kappa(|S_{\beta-\beta^\star}|)$ above.  However, this extra term is $o(1)$ by Theorem~\ref{thm:dim}, and, therefore, does not actually impact the proof.
\end{proof}

Compare this result to the third in Theorem~2 of \citet{castillo.schmidt.vaart.2014}.  First, our rate is slightly better, $s^\star \log(p / s^\star)$ compared to the lasso rate $s^\star \log p$.  Second, our bound does not depend on a ``compatibility number'' \citep[e.g.,][Definition~2.1]{castillo.schmidt.vaart.2014}, which also improves the rate and makes interpretation of our result easier.  {A referee has indicated that the improved results are as a direct consequence of the $(X_S^\top X_S)^{-1}$ term that appears in the prior for $\beta_S$.}  Also, the condition $\kappa(C' s^\star) > 0$, with $C' = 1 + a^{-1}$ and $a < 1$, agrees with the condition, roughly, $\kappa\bigl( (2 + \eps)s^\star \bigr) > 0$ for some $\eps > 0$, in \citet{ariascastro.lounici.2014}; that is, just a little more than identifiability, as in \eqref{eq:identifiable} is needed.

\subsection{Model selection}
\label{SS:selection}

Interest here is on the model $S$ and not directly on the regression coefficients.  In this case, it is convenient to work with the marginal posterior distribution for $S$ which, thanks to the simple conjugate structure in the conditional prior, we can write explicitly as 
\begin{equation}
\label{eq:marginal0}
\pi^n(S) \propto \pi(S) e^{-\frac{\alpha}{2\sigma^2}\|Y-\hat Y_S\|^2} \nu^{-|S|}, 
\end{equation}
where $\nu=(\gamma + \alpha / \sigma^2)^{1/2}$.  Then 
\begin{equation}
\label{eq:model.bound}
\pi^n(S) \leq \frac{\pi^n(S)}{\pi^n(S^\star)} = \frac{\pi(S)}{\pi(S^\star)} \nu^{|S^\star|-|S|} e^{\frac{\alpha}{2\sigma^2}\{\|Y-\hat Y_{S^\star}\|^2 - \|Y-\hat Y_S\|^2\}}. 
\end{equation}
From this bound, we can show that the posterior concentrates on models contained in $S^\star$, i.e., asymptotically, it will not charge any models with unnecessary variables.  Furthermore, this conclusion requires no conditions on the $X$ matrix or true $\beta^\star$.  For simplicity, we will focus on the particular complexity prior $f_n$ in \eqref{eq:complexity} shown previously to yield desirable posterior concentration properties.  

\begin{thm}
\label{thm:selection1}
Let the constant $a > 0$ in the complexity prior \eqref{eq:complexity} be such that $p^a \gg R$.  Then $\E_{\beta^\star}\{\Pi^n(\beta: S_\beta \supset S_{\beta^\star})\} \to 0$, uniformly over $\beta^\star$.  
\end{thm}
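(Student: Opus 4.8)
The plan is to argue directly from the marginal–posterior bound \eqref{eq:model.bound}, summing over all models that strictly contain $S^\star$. Since $\pi^n(S^\star) \le 1$ pointwise, $\pi^n(S) \le \pi^n(S)/\pi^n(S^\star)$, and the normalizing constant cancels in the ratio, so \eqref{eq:model.bound} is in fact a closed-form expression in $Y$:
\[ \pi^n(S) \le \frac{\pi^n(S)}{\pi^n(S^\star)} = \frac{\pi(S)}{\pi(S^\star)}\,\nu^{|S^\star|-|S|}\,e^{\frac{\alpha}{2\sigma^2}\{\|Y-\hat Y_{S^\star}\|^2 - \|Y-\hat Y_S\|^2\}}. \]
Writing $\Pi^n(\beta: S_\beta \supset S_{\beta^\star}) = \sum_{S \supsetneq S^\star}\pi^n(S)$, it then suffices to bound $\sum_{S \supsetneq S^\star}\E_{\beta^\star}[\pi^n(S)/\pi^n(S^\star)]$ and show it tends to $0$ uniformly in $\beta^\star$.

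The only probabilistic input is the exponential moment of the residual-sum-of-squares difference, and here the restriction $S \supseteq S^\star$ is essential. Because $X\beta^\star = X_{S^\star}\beta_{S^\star}^\star$ lies in the column space of $X_S$, both $Y-\hat Y_S$ and $Y-\hat Y_{S^\star}$ are projections of the noise $\eps$ alone, and a Pythagorean identity gives $\|Y-\hat Y_{S^\star}\|^2 - \|Y-\hat Y_S\|^2 = \|(H_S - H_{S^\star})\eps\|^2$, where $H_S$, $H_{S^\star}$ are the orthogonal projections onto the two column spaces (well-defined, of full rank $|S|$ and $s^\star$ by the standing assumption on $X$). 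Thus $H_S - H_{S^\star}$ is an orthogonal projection of rank $|S|-s^\star$, the difference is $\sigma^2\chi^2_{|S|-s^\star}$-distributed, and its moment generating function at $\alpha/(2\sigma^2)$ equals $(1-\alpha)^{-(|S|-s^\star)/2} =: \phi_0^{|S|-s^\star}$, which is finite precisely because $\alpha \in (0,1)$. This is the one step I would single out as the crux: it is where being a superset of $S^\star$ removes any bias term, and where the fractional power $\alpha$ earns its keep.

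What remains is bookkeeping. Inserting the complexity prior \eqref{eq:complexity} gives $\pi(S)/\pi(S^\star) = \binom{p}{s^\star}\binom{p}{|S|}^{-1}(c\,p^a)^{s^\star-|S|}$; grouping the supersets $S$ by $j := |S|-s^\star \ge 1$, of which there are $\binom{p-s^\star}{j}$, the combinatorial factor collapses, $\binom{p-s^\star}{j}\binom{p}{s^\star}\binom{p}{s^\star+j}^{-1} = \binom{s^\star+j}{j}$, so that
\[ \E_{\beta^\star}\{\Pi^n(\beta: S_\beta \supset S_{\beta^\star})\} \le \sum_{j\ge 1}\binom{s^\star+j}{j}\,r^j, \qquad r := \frac{\phi_0}{c\,\nu\,p^a}. \]
The negative-binomial series $\sum_{j\ge 0}\binom{s^\star+j}{j}x^j = (1-x)^{-(s^\star+1)}$ yields the closed form $(1-r)^{-(s^\star+1)}-1$, which for $r$ eventually small is $O(s^\star r)$. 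Since $s^\star \le R$ for every $\beta^\star$ and the hypothesis $p^a \gg R$ forces both $r \to 0$ and $R\,r \to 0$, we get $(1-r)^{-(s^\star+1)}-1 \le (1-r)^{-(R+1)}-1 \to 0$ uniformly in $\beta^\star$, completing the argument. The only thing to be careful about is exactly this last monotonicity: the bound is increasing in $r$ and in the exponent, so the uniform control is governed by the worst case $s^\star = R$, which is precisely what the condition $p^a \gg R$ supplies; note that no condition on $X$ or on $\beta^\star$ beyond the standing assumptions is needed.
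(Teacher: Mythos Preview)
Your proof is correct and follows essentially the same route as the paper: start from the ratio bound \eqref{eq:model.bound}, recognize that for $S\supset S^\star$ the residual-sum-of-squares difference is $\sigma^2\chi^2_{|S|-s^\star}$ with zero noncentrality, apply the chi-square MGF (this is exactly where $\alpha<1$ is needed), plug in the complexity prior, and use the identity $\binom{p-s^\star}{j}\binom{p}{s^\star}\big/\binom{p}{s^\star+j}=\binom{s^\star+j}{j}$. The only difference is cosmetic: the paper bounds $\binom{s^\star+j}{j}\le s^{j}\le R^{j}$ and sums a geometric series with ratio $O(R/p^a)$, whereas you sum the negative-binomial series exactly to $(1-r)^{-(s^\star+1)}-1$ and then use monotonicity in the exponent to replace $s^\star$ by $R$; both reductions terminate in the same condition $R/p^a\to 0$.
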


\begin{proof}
See Appendix~\ref{proof:selection1}.  
\end{proof}

Theorem~\ref{thm:selection1} says that, asymptotically, our empirical Bayes posterior will not include any unnecessary variables.  It remains to say what it takes for the posterior to asymptotically identify all the important variables.  The first condition is one on the $X$ matrix, specifically, if $s^\star$ is the true model size, then we require $\kappa(s^\star) > 0$; this is implied by monotonicity of $\kappa$ and the identifiability condition \eqref{eq:identifiable} in Section~\ref{SS:other}.  For our second assumption, we consider the magnitudes of the non-zero entries in a $s^\star$-sparse $\beta^\star$.  Intuitively, we cannot hope to be able to distinguish between an actual zero and a very small non-zero, but defining what is ``very small'' requires some care.  Here, we define this cutoff by 
\begin{equation}
\label{eq:rho}
\rho_n = \frac{\sigma}{\kappa(s^\star)} \Bigl\{ \frac{2M(1+\alpha)}{\alpha} \, \log p \Bigr\}^{1/2}, 
\end{equation}
where $M > 0$ is a constant to be determined.  In particular, coefficients of magnitude greater than $\rho_n$ are large enough to be detected.  The so-called \emph{beta-min} condition assumes that all the non-zero coefficients are sufficiently far from zero.  The cutoff $\rho_n$ in \eqref{eq:rho} is better than that appearing in Equation~(2.18) in \citet{buhlmann.geer.book} for the lasso model selector but comparable to that in Theorem~1 of \citet{ariascastro.lounici.2014} and in the third part of Theorem~5 in \citet{castillo.schmidt.vaart.2014}, where the latter requires additional assumptions on $X$.  

\begin{thm}
\label{thm:selection2}
For any $s^\star \leq R$, let $\beta^\star$ be such that $|S_{\beta^\star}| = s^\star$ and 
\[ \min_{j \in S^\star} |\beta_j^\star| \geq \rho_n, \]
with $M > a+1$, where $a > 0$ is in the complexity prior, with $p^a \gg R$. Assuming the condition of Theorem~\ref{thm:dim} holds,  if $\kappa(s^\star) > 0$, then $\E_{\beta^\star}\{\Pi^n(\beta: S_\beta = S_{\beta^\star})\} \to 1$.  
\end{thm}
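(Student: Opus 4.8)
The plan is to pair Theorem~\ref{thm:selection1}, which controls overfitting, with a new estimate controlling underfitting, the latter being where the beta-min assumption and $\kappa(s^\star)>0$ do their work. Since $\{S_\beta\neq S_{\beta^\star}\}\subseteq\{S_\beta\not\subseteq S_{\beta^\star}\}\cup\{S_\beta\subsetneq S_{\beta^\star}\}$, it suffices to prove $\E_{\beta^\star}\{\Pi^n(\beta:S_\beta\subsetneq S_{\beta^\star})\}\to0$: together with the conclusion behind Theorem~\ref{thm:selection1}, that $\E_{\beta^\star}\{\Pi^n(S_\beta\not\subseteq S_{\beta^\star})\}\to0$ (the posterior charges no unnecessary variables), this gives $\E_{\beta^\star}\{\Pi^n(S_\beta=S_{\beta^\star})\}\geq 1-\E_{\beta^\star}\{\Pi^n(S_\beta\not\subseteq S_{\beta^\star})\}-\E_{\beta^\star}\{\Pi^n(S_\beta\subsetneq S_{\beta^\star})\}\to1$.

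For the underfitting term I would start from the marginal ratio bound \eqref{eq:model.bound}. Write $\mu=X\beta^\star=X_{S^\star}\beta^\star_{S^\star}$ and $Y=\mu+\eps$, and let $P_S$ be the orthogonal projection onto the column space of $X_S$, so $\hat Y_S=P_SY$. Fix $S\subsetneq S^\star$, put $T=S^\star\setminus S\neq\emptyset$, and set $u_S=(I-P_S)\mu=(I-P_S)X_T\beta^\star_T$ and $w_S=(P_{S^\star}-P_S)\eps$. Because $S\subseteq S^\star$ forces $\mathrm{col}(X_S)\subseteq\mathrm{col}(X_{S^\star})$ (here the full-rank hypothesis on $X_{S^\star}$ and the restriction to subsets of $S^\star$ are both essential), $u_S$ and $w_S$ both lie in the $|T|$-dimensional subspace $W_S=\mathrm{col}(X_{S^\star})\ominus\mathrm{col}(X_S)$, and a short orthogonal decomposition of $(I-P_S)Y$ gives the identity $\|Y-\hat Y_{S^\star}\|_2^2-\|Y-\hat Y_S\|_2^2=-\|u_S+w_S\|_2^2$. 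Plugging into \eqref{eq:model.bound} leaves $\pi^n(S)\leq\{\pi(S)/\pi(S^\star)\}\,\nu^{|T|}\,e^{-(\alpha/2\sigma^2)\|u_S+w_S\|_2^2}$.

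Now I would take $\E_{\beta^\star}$. Since $w_S$ is $\nm(0,\sigma^2 I)$ on $W_S$ and $u_S$ is deterministic, an explicit Gaussian integral gives $\E_{\beta^\star}\{e^{-(\alpha/2\sigma^2)\|u_S+w_S\|_2^2}\}=(1+\alpha)^{-|T|/2}\exp\{-\alpha\|u_S\|_2^2/(2\sigma^2(1+\alpha))\}$. By the definition \eqref{eq:kappa} of $\kappa$, $\|u_S\|_2^2=\min_\theta\|X_T\beta^\star_T-X_S\theta\|_2^2=\min_\theta\|X\zeta\|_2^2$ over $\zeta$ supported on $S\cup T=S^\star$ with $\zeta_T=\beta^\star_T$, so $\|u_S\|_2^2\geq\kappa(s^\star)^2\|\beta^\star_T\|_2^2\geq\kappa(s^\star)^2|T|\rho_n^2$ by beta-min; with the definition \eqref{eq:rho} of $\rho_n$ this is $|T|\{2M(1+\alpha)/\alpha\}\sigma^2\log p$, hence $\E_{\beta^\star}\{e^{-(\alpha/2\sigma^2)\|u_S+w_S\|_2^2}\}\leq p^{-M|T|}$. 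Therefore $\E_{\beta^\star}\{\pi^n(S)\}\leq\{\pi(S)/\pi(S^\star)\}\nu^{|T|}p^{-M|T|}$. Summing over $S\subsetneq S^\star$, grouped by $k=|S|$ (so $|T|=s^\star-k$), using $\pi(S)=\binom{p}{|S|}^{-1}f_n(|S|)$ with $f_n$ as in \eqref{eq:complexity} together with the identity $\binom{s^\star}{k}\binom{p}{s^\star}/\binom{p}{k}=\binom{p-k}{s^\star-k}\leq p^{|T|}/|T|!$, gives $\sum_{|S|=k,\,S\subseteq S^\star}\E_{\beta^\star}\{\pi^n(S)\}\leq (c\nu p^{1+a-M})^{|T|}/|T|!$, whence $\E_{\beta^\star}\{\Pi^n(S_\beta\subsetneq S_{\beta^\star})\}\leq\sum_{j\geq1}(c\nu p^{1+a-M})^j/j!=\exp(c\nu p^{1+a-M})-1\to0$, since $M>a+1$ makes $1+a-M<0$. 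The bound is free of the particular $\beta^\star$, so the convergence is uniform.

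I expect two points to need care. First, the identity $\|Y-\hat Y_{S^\star}\|_2^2-\|Y-\hat Y_S\|_2^2=-\|u_S+w_S\|_2^2$ hinges on the column spaces being nested, so one genuinely has to invoke the overfitting conclusion first and work only with $S\subseteq S^\star$; for a general $S$ the cross terms between $\mathrm{col}(X_S)$ and $\mathrm{col}(X_{S^\star})$ survive and one is pushed to bound $\kappa$ at an argument strictly larger than $s^\star$ (this is where the condition inherited from Theorem~\ref{thm:dim}, which forces $|S_\beta|$ of order $s^\star$, would be the relevant tool). Second, keeping the signal-strength threshold down to $M>a+1$, rather than a larger multiple of $a$, relies on performing the Gaussian integral exactly, so that the $(1+\alpha)$ built into $\rho_n$ in \eqref{eq:rho} cancels the $(1+\alpha)$ produced by integrating out $w_S$, and on the combinatorial identity $\binom{s^\star}{k}\binom{p}{s^\star}/\binom{p}{k}=\binom{p-k}{s^\star-k}$, which absorbs the $\binom{s^\star}{k}$ count of sub-models and leaves a $1/|T|!$ that prevents the sum over the $2^{s^\star}$ sub-models of $S^\star$ from inflating the requirement on $M$.
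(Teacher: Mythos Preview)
Your proof is correct and follows essentially the same route as the paper's: reduce via Theorem~\ref{thm:selection1} to the underfitting case $S\subsetneq S^\star$, bound $\pi^n(S)/\pi^n(S^\star)$ using \eqref{eq:model.bound}, take $\E_{\beta^\star}$ through the noncentral chi-square moment generating function, lower-bound the noncentrality by $\kappa(s^\star)^2\|\beta^\star_{S^\star\setminus S}\|_2^2$, and sum using the complexity prior. The only differences are cosmetic: you derive the bound $\|(I-P_S)X_T\beta^\star_T\|_2^2\geq\kappa(s^\star)^2\|\beta^\star_T\|_2^2$ directly from the definition of $\kappa$, whereas the paper cites the irrepresentability Lemma~5 of \citet{ariascastro.lounici.2014}; and you keep the factorial in $\binom{p-k}{s^\star-k}\leq p^{|T|}/|T|!$, summing to $e^{c\nu p^{1+a-M}}-1$, while the paper uses the cruder $\binom{p-k}{s^\star-k}\leq p^{|T|}$ and a geometric-series bound.
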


\begin{proof}
See Appendix~\ref{proof:selection2}.
\end{proof}

\section{Numerical results}
\label{S:numerical}

\subsection{Implementation}
\label{SS:implementation}

To compute our empirical Bayes posterior distribution, we employ a Markov chain Monte Carlo method.  To start, recall from \eqref{eq:marginal0} that we can write the marginal posterior mass function, $\pi^n(S)$, for the model $S$ can be written down explicitly, i.e., 
\[ \pi^n(S) \propto \pi(S) \, e^{-\frac{\alpha}{2\sigma^2}\|Y - \widehat Y_S\|^2} \Bigl(\gamma + \frac{\alpha}{\sigma^2} \Bigr)^{-|S|/2}, \]
where $\widehat Y_S = X_S \widehat\beta_S$ is the least-squares prediction for model $S$.  Intuitively, there are three contributing factors to the posterior distribution for $S$, namely, the prior probability of the model, a measure of how well the model fits the data, and an additional penalty on the complexity of the model.  So, clearly, the posterior distribution will favor models with smaller number of variables that provide adequate fit to the observed $Y$.  This provides further intuition about theorems presented in Section~\ref{S:theory}.  

Besides this intuition, the formula $\pi^n(S)$ provides a convenient way to run a Rao--Blackwellized Metropolis--Hastings method to sample from the posterior distribution of $S$.  Indeed, if $q(S' \mid S)$ is a proposal function, then a single iteration of our proposed Metropolis--Hastings sampler goes as follows:
\begin{enumerate}
\item Given a current state $S$, sample $S' \sim q(\cdot \mid S)$.
\vspace{-2mm}
\item Move to the new state $S'$ with probability 
\[ \min\Bigl\{1, \frac{\pi^n(S')}{\pi^n(S)} \frac{q(S \mid S')}{q(S' \mid S)} \Bigr\}; \]
otherwise, stay at state $S$.
\end{enumerate}
Repeating this process $M$ times, we obtain a sample of models $S_1,\ldots,S_M$ from the posterior $\pi^n(S)$.  Monte Carlo approximations of, say, the inclusion probabilities (Section~\ref{SS:simulations}) of individual variables can then easily be computed based on this sample.  In our case, we use a symmetric proposal distribution $q(S' \mid S)$, i.e., one that samples $S'$ uniformly from those models that differ from $S$ in exactly one position, which simplifies the acceptance probability above since the $q$-ratio is identically 1.  Also, we initialize our Markov chain Monte Carlo search at the model selected by lasso.  

To implement this procedure, some additional tuning parameters need to be specified.  First, recall that $(\alpha,\gamma) = (1,0)$ corresponds to the genuine Bayes model with a flat prior for $\beta_S$.  Our theory does not cover this case, but we can mimic it by picking something close.  Here we consider $\alpha=0.999$ and $\gamma=0.001$; in our experience, the performance is not sensitive to the choice of $(\alpha,\gamma)$ in a neighborhood of $(0.999, 0.001)$.  Second, for the prior on the model size, we employ the complexity prior \eqref{eq:complexity} with $c=1$ and $a=0.05$, i.e., $f_n(s) \propto p^{-0.05 s}$.  The choice of small $a$ makes the prior sufficiently spread out, allowing the posterior to move across the model space and, in particular, helping the Markov chain for $S$ to mix reasonably well.  Third, in practice, the error variance $\sigma^2$ is seldom known, so some procedure to handle unknown $\sigma^2$ is needed.  We proposed to modify our empirical Bayes posterior by plugging in an estimate of $\sigma^2$.  In particular, we use a residual mean square error based on a lasso fit \citep{reid.tibshirani.friedman.2014}.  


Finally, if samples from the $\beta$ posterior are desired, then these can easily be obtained, via conjugacy, after a sample of $S$ is available.  In particular, the conditional posterior distribution for $\beta_S$, given $S$, is normal with mean $\hat\beta_S$ and variance $(\gamma + \frac{\alpha}{\sigma^2})^{-1} (X_S^\top X_S)^{-1}$.  R code to implement our procedure is available at \url{www.math.uic.edu/~rgmartin}.

\subsection{Simulations}
\label{SS:simulations}

In this section, we reconsider some of the simulation experiments performed by \citet{narisetty.he.2014}, which are related to experiments presented in \citet{johnson.rossell.2012}.  In each setting, the error variance is $\sigma^2=1$; the covariate matrix is obtained by sampling from a multivariate normal distribution with zero mean, unit variance, and constant pairwise correlation $\rho=0.25$; and the true model $S^\star$ has $s^\star=5$.  The particular correlation structure among the covariates is given practical justification in \citet{johnson.rossell.2012}.  Under this setup, we consider three different settings:
\begin{description}
\item[\it Setting~1.] $n=100$, $p=500$, and $\beta_{S^\star}=(0.6, 1.2, 1.8, 2.4, 3.0)^\top$;
\vspace{-2mm}
\item[\it Setting~2.] $n=200$, $p=1000$, and $\beta_{S^\star}$ the same as in Setting~1;
\vspace{-2mm}   
\item[\it Setting~3.] $n=100$, $p=500$, and $\beta_{S^\star}=(0.6, 0.6, 0.6, 0.6, 0.6)^\top$.
\end{description}
Our Settings~1--2 correspond to the two $(n,p)$ configurations in Case~2 of \citet{narisetty.he.2014} and our Setting~3 is the same as their Case~3.  

We carry out model selection by retaining those variables whose inclusion probability $p_j = \Pi^n(\beta_j \neq 0)$, $j=1,\ldots,p$, exceeds 0.5; this is the so-called median probability model, shown to be optimal, in a certain sense, by \citet{barbieri.berger.2004}.  Alternatively, one could select the model with the largest posterior probability, but this is more expensive computationally compared to the median probability model---only $p$ inclusion probabilities instead of up to $2^p$ model probabilities.  In all cases, the posterior almost immediately concentrates on the true model.  Our Markov chain Monte Carlo required only 5000 iterations to reach convergence, which took only a few seconds on an ordinary laptop computer: about 10 seconds for Setting~1 and about 25 seconds for Setting~2.  

To summarize the performance, we consider five different measures.  First, we consider the mean inclusion probability for those variables in and out of the active set $S^\star$, respectively, i.e., 
\[ \bar p_1 = \frac{1}{s^\star} \sum_{j \in S^\star} p_j \quad \text{and} \quad \bar p_0 = \frac{1}{p-s^\star} \sum_{j \not\in S^\star} p_j. \]
We expect the former to be close to 1 and the latter to be close to 0.  Next, we consider the probability that the model selected by our empirical Bayes method, denoted by $\hat S$ is equal to or contains the true model $S^\star$.  Finally, we also compute the false discovery rate of our selection procedure.  A summary of these quantities for our empirical Bayes method, denoted by \emph{EB}, across the three settings is given in Tables~\ref{table:setting1}--\ref{table:setting3}.  

For comparison, we consider those methods discussed in \citet{narisetty.he.2014}, including their two Bayesian methods, denoted by BASAD and BASAD.BIC.  Two other Bayesian methods considered are the credible region approach of \citet{bondell.reich.2012}, denoted by BCR.Joint, and the spike-and-slab method of \citet{ishwaran.rao.2005.jasa, ishwaran.rao.2005.aos}, denoted by SpikeSlab.  We also consider three penalized likelihood methods, all tuned with BIC, namely, the lasso \citep{tibshirani1996}, the elastic net \citep{zou.hastie.2005}, and the smoothly clipped absolute deviation \citep{fanli2001}, denoted by Lasso.BIC, EN.BIC, and SCAD.BIC, respectively.  The results for these methods are taken from Tables~2--3 in \citet{narisetty.he.2014}, which were obtained based on 200 samples taken from the models described in Settings~1--3 described above.  

Our selection method based on our empirical Bayes posterior is the overall the best among those being compared in terms of selecting the true model and false discovery rate.  In addition to the strong finite-sample performance of our model selection procedure, our theory is arguably stronger than that available for the other methods in this comparison.  Take, for example, the BASAD method of \citet{narisetty.he.2014}, the next-best-performer in the simulation study.  Their method produces a posterior distribution for $\beta$ but since their prior has no point mass, this posterior cannot concentrate on a lower-dimensional subspace of $\RR^p$.  So, it is not clear if their posterior distribution for $\beta$ can attain the minimax concentration rate without tuning the prior using knowledge about the underlying sparsity level.

\begin{table}[t]
\begin{center}
\begin{tabular}{lccccc}
\hline
Method & $\bar p_0$ & $\bar p_1$ & $\prob(\hat S = S^\star)$ & $\prob(\hat S \supseteq S^\star)$ & FDR \\
\hline
BASAD & 0.001 & 0.948 & 0.730 & 0.775 & 0.011 \\
BASAD.BIC & 0.001 & 0.948 & 0.190 & 0.915 & 0.146 \\
BCR.Joint & & & 0.070 & 0.305 & 0.268 \\
SpikeSlab & & & 0.000 & 0.040 & 0.626 \\
Lasso.BIC & & & 0.005 & 0.845 & 0.466 \\
EN.BIC & & & 0.135 & 0.835 & 0.283 \\
SCAD.BIC & & & 0.045 & 0.980 & 0.328 \\
$EB$ & 0.002 & 0.959 & 0.680 & 0.795 & 0.051 \\
\hline
\end{tabular}
\end{center}
\caption{Simulation results for Setting~1.  First seven rows taken from Table~2 (top) in \citet{narisetty.he.2014}; the \emph{EB} row corresponds to our empirical Bayes procedure.}
\label{table:setting1}
\end{table}

\begin{table}[t]
\begin{center}
{
\begin{tabular}{lccccc}
\hline
Method & $\bar p_0$ & $\bar p_1$ & $\prob(\hat S = S^\star)$ & $\prob(\hat S \supseteq S^\star)$ & FDR \\
\hline
BASAD & 0.000 & 0.986 & 0.930 & 0.950 & 0.000 \\
BASAD.BIC & 0.000 & 0.986 & 0.720 & 0.990 & 0.046 \\
BCR.Joint & & & 0.090 & 0.250 & 0.176 \\
SpikeSlab & & & 0.000 & 0.050 & 0.574 \\
Lasso.BIC & & & 0.020 & 1.000 & 0.430 \\
EN.BIC & & & 0.325 & 1.000 & 0.177 \\
SCAD.BIC & & & 0.650 & 1.000 & 0.091 \\
$EB$ & 0.000 & 0.998 & 0.945 & 0.990 & 0.015 \\
\hline
\end{tabular}
}
\end{center}
\caption{Simulation results for Setting~2.  First seven rows taken from Table~2 (bottom) in \citet{narisetty.he.2014}; the \emph{EB} row corresponds to our empirical Bayes procedure.}
\label{table:setting2}
\end{table}

\begin{table}[t]
\begin{center}
\begin{tabular}{lccccc}
\hline
Method & $\bar p_0$ & $\bar p_1$ & $\prob(\hat S = S^\star)$ & $\prob(\hat S \supseteq S^\star)$ & FDR \\
\hline
BASAD & 0.002 & 0.622 & 0.185 & 0.195 & 0.066 \\
BASAD.BIC & 0.002 & 0.622 & 0.160 & 0.375 & 0.193 \\
BCR.Joint & & & 0.030 & 0.315 & 0.447 \\
SpikeSlab & & & 0.000 & 0.000 & 0.857 \\
Lasso.BIC & & & 0.000 & 0.520 & 0.561\\
EN.BIC & & & 0.040 & 0.345 & 0.478 \\
SCAD.BIC & & & 0.045 & 0.340 & 0.464 \\
$EB$ & 0.003 & 0.811 & 0.305 & 0.350 & 0.092 \\
\hline
\end{tabular}
\end{center}
\caption{Simulation results for Setting~3.  First seven rows taken from Table~3 in \citet{narisetty.he.2014}; the \emph{EB} row corresponds to our empirical Bayes procedure.}
\label{table:setting3}
\end{table}

\ifthenelse{1=1}{}{

NEW SIMULATIONS USING ALPHA = 0.999, GAMMA = 0.001

> ebreg.sim(reps=200, n=200, p=1000, r=0.25, beta=0.6 * 1:5, M=2000, a=0.05)
              pp0      pp1 pr.exact pr.contain        fdr     time
[1,] 0.0004016281 0.997559    0.945       0.99 0.01549405 22.99551
> 
> 
> ebreg.sim(reps=200, n=100, p=500, r=0.25, beta=0.6 * 1:5, M=2000, a=0.05)
            pp0      pp1 pr.exact pr.contain        fdr    time
[1,] 0.00206997 0.959215     0.68      0.795 0.05104239 8.28069
> 
> 
> ebreg.sim(reps=200, n=100, p=500, r=0.25, beta=0.6 * rep(1,5), M=2000, a=0.05)
            pp0      pp1 pr.exact pr.contain        fdr    time
[1,] 0.00261197 0.811447    0.305       0.35 0.09240267 8.19032

OLD SIMULATIONS...

CASE 1:

> o1 <- ebreg.sim(200, 100, 500, 0.25, 0.6 * 1:5, 2000, TRUE, 0.1); print(o1)
             pp0       pp1 pr.exact pr.contain        fdr
[1,] 0.003246894 0.9544405    0.635      0.755 0.06901045

> o1 <- ebreg.sim(200, 100, 500, 0.25, 0.6 * 1:5, 2000, TRUE, 0.05); print(o1)
             pp0     pp1 pr.exact pr.contain        fdr
[1,] 0.002204419 0.96558    0.745      0.835 0.04871404

CASE 2:

> o2 <- ebreg.sim(200, 200, 1000, 0.25, 0.6 * 1:5, 2000, TRUE, 0.1); print(o2)
             pp0       pp1 pr.exact pr.contain        fdr
[1,] 0.001356093 0.9954135    0.905      0.975 0.02745995

> o2 <- ebreg.sim(200, 200, 1000, 0.25, 0.6 * 1:5, 2000, TRUE, 0.05); print(o2)
             pp0       pp1 pr.exact pr.contain        fdr
[1,] 0.0002108719 0.9986405     0.95      0.995 0.01058442

CASE 3:

> o3 <- ebreg.sim(200, 100, 500, 0.25, 0.6 * rep(1, 5), 2000, TRUE, 0.1); o3
             pp0      pp1 pr.exact pr.contain       fdr
[1,] 0.004038485 0.776765    0.225       0.27 0.1082552

> o3 <- ebreg.sim(200, 100, 500, 0.25, 0.6 * rep(1, 5), 2000, TRUE, 0.05); o3
             pp0      pp1 pr.exact pr.contain       fdr
[1,] 0.004338581 0.786109     0.23       0.29 0.1009734

}


\section{Discussion}
\label{S:discuss}

We have presented an empirical Bayes model for the sparse high-dimensional regression problem.  Though the proposed approach has some unusual features, such as a data-dependent prior, we characterize the posterior concentration rate, which agrees with the optimal minimax rate in some cases.  To our knowledge, this is the only available minimax concentration rate result for a full posterior distribution in the sparse high-dimensional linear model.  Moreover, our formulation allows for relatively simple posterior computation, via Markov chain Monte Carlo, and simulation studies show that model selection by thresholding the posterior inclusion probabilities outperforms a variety of existing methods.   


The general strategy proposed here goes as follows.  Suppose we have a high-dimensional parameter, and different models $S$ identify a set of parameters $\theta_S$.  Suppose further that $\theta$ is sparse in the sense that only a few of its entries are non-null.  Then an empirical Bayes model is obtained by specifying a prior for $(S,\theta_S)$ as $\pi(S) \pi(d\theta_S \mid S)$, where $\pi(d\theta_S \mid S)$ would be allowed to depend on data through, say, the maximum likelihood estimator $\hat\theta_S$ of $\theta_S$.  Intuitively, the idea is to center the conditional prior on a data-dependent point, say $\hat\theta_S$, and then use the fractional likelihood to prevent the posterior to track the data too closely.  We believe this is a general tool that can be used in high-dimensional problems, and one possible application of this approach, which we plan to explore, is a mixture model where $S$ represents the number of mixture components, and $\theta_S$ is the set of parameters associated with a mixture model with $S$ mixture components.

\section*{Acknowledgments}

The authors are grateful for the valuable comments provided by the Editor, Associate Editor, and three anonymous referees.  This work is partially supported by the U.~S.~National Science Foundation, grants DMS--1506879 and DMS--1507073.

\appendix

\section{Proofs}
\label{S:proofs}

\ifthenelse{1=1}{}{
\subsection{Proof of Lemma~\ref{lem:denominator}}
\label{SS:proof.den}

Write $\|\cdot\|$ for the $\ell_2$-norm $\|\cdot\|_2$.  The denominator $D_n$ in \eqref{eq:post} involves an average over all suitable models $S$ with respect to $\pi(S)$.  This average is greater than the quantity for $S=S^\star$ times $\pi(S^\star)$.  That is, $D_n$ is bigger than 
\[ \pi(S^\star) \int e^{-\frac{\alpha}{2\sigma^2}\{\|Y-X_{S^\star}\beta_{S^\star}\|^2 - \|Y-X_{S^\star}\beta_{S^\star}^\star\|^2\}} \nm(\beta_{S^\star} \mid \hat\beta_{S^\star}, \gamma^{-1}(X_{S^\star}^\top X_{S^\star})^{-1}) \,d\beta_{S^\star}. \]
Direct calculation shows that the lower bound equals 
\[ \pi(S^\star) e^{\frac{\alpha}{2\sigma^2}\|X_{S^\star}(\hat\beta_{S^\star}-\beta_{S^\star}^\star)\|^2} \Bigr(1 + \frac{\alpha}{\gamma \sigma^2} \Bigr)^{-|S^\star|/2}. \]
According to the least-squares distribution theory, the quantity in the exponent is proportional to a $\chisq(|S^\star|)$ random variable so, with probability~1 for all sufficiently large $n$, the exponential term equals $e^{\frac{\alpha}{2\sigma^2}|S^\star|}$.  Then the lower bound can be written as 
\[ \pi(S^\star) e^{-\{\frac12\log(1 + \frac{\alpha}{\gamma \sigma^2}) - \frac12\frac{\alpha}{\sigma^2}\} \, |S^\star|}. \]
Under the stated conditions, the bracketed term in the exponent is positive.  Set $c$ to be this positive quantity to complete the proof. 
}

\subsection{Proof of Lemma~\ref{lem:numerator}}
\label{SS:proof.num}

Write $B_n=B_{\eps_n}$.  Rewrite the numerator $N_n$ of the posterior \eqref{eq:post} as 
\begin{align*}
N_n & = \int_{B_n} \sum_S \pi(S) \Bigl\{ \frac{\nm(Y \mid X\beta_{S+}, \sigma^2 I)}{\nm(Y \mid X \beta^\star, \sigma^2 I)} \Bigr\}^\alpha \nm(\beta_S \mid \hat\beta_S, \gamma^{-1}(X_S^\top X_S)^{-1}) \,d\beta_S \\
& = \sum_S \pi(S) \int_{B_n(S)} \Bigl\{ \frac{\nm(Y \mid X\beta_{S+}, \sigma^2 I)}{\nm(Y \mid X \beta^\star, \sigma^2 I)} \Bigr\}^\alpha \nm(\beta_S \mid \hat\beta_S, \gamma^{-1}(X_S^\top X_S)^{-1}) \,d\beta_S,
\end{align*}
where the sum is over all $S$ with $|S| \leq n$, $\beta_{S+}$ is a $p$-vector made by augmenting $\beta_S$ with $\beta_j = 0$ for all $j \in S^c$, and $B_n(S)$ is the set of all $\beta_S$ such that $\beta_{S+} \in B_n$.  Focus on a single $S$.  Taking expectation of the inner integral with respect to $Y \sim \nm(X\beta^\star, \sigma^2 I)$ gives 
\[ \int_{B_n(S)} \E\Bigl[ \Bigl\{ \frac{\nm(Y \mid X\beta_{S+}, \sigma^2 I)}{\nm(Y \mid X \beta^\star, \sigma^2 I)} \Bigr\}^\alpha \nm(\beta_S \mid \hat\beta_S, \gamma^{-1}(X_S^\top X_S)^{-1}) \Bigr] \,d\beta_S. \]
Apply H\"older's inequality to the inside expectation, i.e., for $h > 1$ and $q=(h-1)/h$, 
\begin{align}
\E\Bigl[ \Bigl\{ & \frac{\nm(Y \mid X\beta_{S+}, \sigma^2 I)}{\nm(Y \mid X \beta^\star, \sigma^2 I)} \Bigr\}^\alpha \nm(\beta_S \mid \hat\beta_S, \gamma^{-1}(X_S^\top X_S)^{-1}) \Bigr] \notag \\
& \leq \E^{1/h} \Bigl[ \Bigl\{\frac{\nm(Y \mid X\beta_{S+}, \sigma^2 I)}{\nm(Y \mid X \beta^\star, \sigma^2 I)} \Bigr\}^{h\alpha} \Bigl] \, \E^{1/q} \bigl[ \nm^q(\beta_S \mid \hat\beta_S, \gamma^{-1} (X_S^\top X_S)^{-1}) \bigr]. \label{eq:holder}
\end{align}
If $h\alpha < 1$, then a Renyi divergence formula is available for the first term, giving 
\begin{equation}
\label{eq:holder1}
\E^{1/h} \Bigl[ \Bigl\{\frac{\nm(Y \mid X\beta_{S+}, \sigma^2 I)}{\nm(Y \mid X \beta^\star, \sigma^2 I)} \Bigr\}^{h\alpha} \Bigl] = e^{-\frac{\alpha (1-h\alpha)}{2\sigma^2} \|X(\beta_{S+} - \beta^\star)\|^2}. 
\end{equation}
For the second term in the product above, recall that $\hat\beta_S = (X_S^\top X_S)^{-1} X_S^\top Y$.  Then 
\[ X_S \beta_S - X_S \hat\beta_S = X_S (X_S^\top X_S)^{-1} X_S^\top (X_S \beta_S - Y), \]
and, therefore, since $X_S (X_S^\top X_S)^{-1} X_S^\top$ is idempotent of rank $|S|$, we get that
\[ Z := \tfrac{1}{\sigma^2}\|X_S \beta_S - X_S \hat\beta_S\|^2 = \tfrac{1}{\sigma^2}\|X_S (X_S^\top X_S)^{-1} X_S^\top (X_S \beta_S - Y)\|^2 \]
is distributed as a non-central chi-square with $|S|$ degrees of freedom and non-centrality parameter $\lambda = \frac{1}{\sigma^2}\|X_S(\beta_S - (X_S^\top X_S)^{-1}X_S^\top X \beta^\star)\|^2$.  Then 
\begin{align}
\E^{1/q} \bigl[ &\nm^q(\beta_S \mid \hat\beta_S, \gamma^{-1} (X_S^\top X_S)^{-1}) \bigr] \notag \\
& = \frac{\gamma^{|S|/2}|X_S^\top X_S|^{1/2}}{(2\pi)^{|S|/2}} \E^{1/q} (e^{-\frac{q\gamma}{2} Z}) \notag \\
& = \frac{\gamma^{|S|/2}|X_S^\top X_S|^{1/2}}{(2\pi)^{|S|/2}} (1 + q\gamma)^{-\frac{|S|}{2q}} e^{-\frac{\gamma}{2(1+q\gamma)} \lambda} \notag \\
& = \frac{\gamma^{|S|/2}|X_S^\top X_S|^{1/2}}{(2\pi)^{|S|/2}} (1 + q\gamma)^{-\frac{|S|}{2q}} e^{-\frac{\gamma}{2\sigma^2(1+q\gamma)} \|X_S(\beta_S - (X_S^\top X_S)^{-1}X_S^\top X \beta^\star)\|^2}, \label{eq:holder2} 
\end{align}
where the second equality is from the standard formula for the moment generating function of a non-central chi-square random variable.  Now we must integrate the upper bound \eqref{eq:holder} over $A_n(S)$ with respect to $\beta_S$.  It is clear from the definition of $B_n(S)$ that the quantity in \eqref{eq:holder1} is bounded on $B_n(S)$, i.e., 
\[ e^{-\frac{\alpha (1-h\alpha)}{2\sigma^2} \|X(\beta_{S+} - \beta^\star)\|^2} \leq e^{-\frac{\alpha (1-h\alpha)}{2\sigma^2} \eps_n}, \quad \beta_S \in B_n(S). \]
It is also clear that the expression \eqref{eq:holder2} resembles a normal density in $\beta_S$, modulo some multiplicative factors.  The algebra is tedious, but the integral of \eqref{eq:holder2} with respect to $\beta_S$ is bounded above by 
\[ \phi^{|S|} \quad \text{where} \quad \phi = \Bigl\{ \frac{(1 + q\gamma \sigma^2)^{1-\frac1q}}{\sigma^2} \Bigr\}^{\frac12}. \]
Putting everything together, we have that 
\[ \E(N_n) \leq e^{-\frac{\alpha (1-h\alpha)}{2\sigma^2} \eps_n} \sum_S \phi^{|S|} \pi(S). \]
Taking $d = \alpha (1-h\alpha) / 2\sigma^2$ completes the proof.

\subsection{Proof of Lemma~\ref{lem:dim}}
\label{SS:proof.dim}

The proof is an application of ideas used in the proof of Lemma~\ref{lem:numerator}.  In particular, $N_n(\Delta)$ equals 
\[ \sum_{S: \Delta \leq |S| \leq n} \pi(S) \int \Bigl\{ \frac{\nm(Y \mid X\beta_{S+}, \sigma^2 I)}{\nm(Y \mid X \beta^\star, \sigma^2 I)} \Bigr\}^\alpha \nm(\beta_S \mid \hat\beta_S, \gamma^{-1}(X_S^\top X_S)^{-1}) \,d\beta_S, \]
Take expectation with respect to $Y \sim \nm(X\beta^\star, \sigma^2 I)$ as in the proof of Lemma~\ref{lem:denominator} and move expectation to the inside of the integral.  Working with each $S$ term separately, apply H\"older's inequality to bound the expectation of the product.  This upper bound consists of a product of three terms just like in the previous proof.  The first is bounded by 1; the second is $\phi^{|S|}$; and the third is a probability density function in $\beta_S$.  Then the integral over $\beta_S$ is bounded by $\phi^{|S|}$ and the claim follows.

\subsection{Proof of Theorem~\ref{thm:selection1}}
\label{proof:selection1}

Fix $\beta^\star$ and write $S^\star = S_{\beta^\star}$ as usual.  Write $P_S$ for the $n \times n$ matrix projecting onto the column space of $X_S$.  If $S \supset S^\star$, then 
\[ \|Y - \hat Y_{S^\star}\|^2 - \|Y - \hat Y_S\|^2 = Y^\top (P_S - P_{S^\star}) Y, \]
and, since $P_S - P_{S^\star}$ is idempotent of rank $|S|-|S^\star|$, this quantity is distributed as a non-central chi-square with $|S|-|S^\star|$ degrees of freedom and non-centrality parameter
\[ (X\beta^\star)^\top (P_S - P_{S^\star}) (X \beta^\star). \]
By definition of $P_S$, it turns out that the non-centrality parameter in the above display is zero, so it is actually an ordinary/central chi-square.  From the chi-square moment generating function we immediately get 
\[ \E_{\beta^\star}\{\pi^n(S)\} \leq \frac{\pi(S)}{\pi(S^\star)} z^{|S^\star|-|S|}, \]
where $z$ is a constant that depends only on $(\alpha,\gamma, \sigma^2)$.  Then
\[ \E_{\beta^\star}\{\Pi^n(\beta: S_\beta \supset S^\star)\} = \sum_{S: S \supset S^\star} \E_{\beta^\star}\{\pi^n(S)\} \leq \sum_{S: S \supset S^\star} \frac{\pi(S)}{\pi(S^\star)} z^{|S^\star|-|S|}. \]
Plug in our complexity prior and simplify the upper bound:
\[ \sum_{s > s^\star} \frac{\binom{p-s^\star}{p-s} \binom{p}{s^\star}}{\binom{p}{s}} \Bigl( \frac{z}{cp^a} \Bigr)^{s-s^\star}. \]
From 
\[ \frac{\binom{p-s^\star}{p-s} \binom{p}{s^\star}}{\binom{p}{s}} = \binom{s}{s^\star} = \binom{s}{s-s^\star} \leq s^{s-s^\star}, \]
the upper bound becomes 
\[ \sum_{s = s^\star}^R \Bigl( \frac{z s}{cp^a} \Bigr)^{s-s^\star} \leq \frac{z R}{cp^a} \times O(1). \]
So, if $a$ is such that $p^a \gg R$, the upper bound vanishes, completing the proof.

\subsection{Proof of Theorem~\ref{thm:selection2}}
\label{proof:selection2}

In light of Theorem~\ref{thm:selection1}, it suffices to show that $\E_{\beta^\star}\{\Pi^n(\beta: S_\beta \subset S^\star)\} \to 0$.  To start, take a generic $S \subset S^\star$.  Then, from \eqref{eq:model.bound}, we have 
\[ \frac{\pi^n(S)}{\pi^n(S^\star)} = \frac{\pi(S)}{\pi(S^\star)} \nu^{s^\star-|S|} e^{-\frac{\alpha}{2\sigma^2}\{\|Y-\hat Y_S\|^2 - \|Y-\hat Y_{S^\star}\|^2\}}. \]
The exponent $Z_S := \frac{1}{\sigma^2}\{\|Y-\hat Y_S\|^2 - \|Y-\hat Y_{S^\star}\|^2\}$ is a chi-square random variable with $s^\star-|S|$ degrees of freedom and non-centrality parameter 
\[ \lambda_S := \tfrac{1}{\sigma^2} (X\beta^\star)^\top (P_{S^\star}-P_S) (X \beta^\star). \]
The algebra is a bit tedious, but we can simplify $\lambda_S$ as 
\[ \lambda_S = \tfrac{1}{\sigma^2} \|(I-P_S) X_{S^\star \cap S^c} \beta_{S^\star \cap S^c}^\star\|^2. \]
From the non-central chi-square moment generating function we have
\[ \E_{\beta^\star}\{ \pi^n(S) \} \leq \frac{\pi(S)}{\pi(S^\star)} z^{s^\star-|S|} e^{-\frac{\alpha}{2(1+\alpha)}\lambda_S}, \]
where $z=\nu / (1 + \alpha)$.  The irrepresentability result in Lemma~5 of \citet{ariascastro.lounici.2014} gives a lower bound on $\lambda_S$:
\[ \lambda_S \geq \tfrac{1}{\sigma^2} \kappa(|S^\star \cap S^c|)^2 \|\beta_{S^\star \cap S^c}^\star \|^2. \]
Monotonicity of $\kappa$ implies that 
\[ \kappa(|S^\star \cap S^c|) \geq \kappa(s^\star) > 0 \]
and, furthermore, by the beta-min condition,  
\[ \|\beta_{S^\star \cap S^c}^\star\|^2 \geq \rho_n^2(s^\star-|S|). \]
Putting everything together, including the definition of $\rho_n$, we get 
\begin{align*}
\E_{\beta^\star}\{ \pi^n(S) \} & \leq \frac{\pi(S)}{\pi(S^\star)} z^{s^\star-|S|} e^{-\frac{\alpha}{2(1+\alpha)}\frac{1}{\sigma^2}\kappa(s^\star)^2 \rho_n^2(s^\star-|S|)} \\
& = \frac{\pi(S)}{\pi(S^\star)} (z p^{-M})^{s^\star - |S|}.
\end{align*}
If we can show that the sum of our upper bound above, over all $S \subset S^\star$, vanishes, then we are done.   Plugging in our complexity prior, we need to bound 
\[ \sum_{s < s^\star} \frac{\binom{s^\star}{s} \binom{p}{s^\star}}{\binom{p}{s}} (z cp^{a-M})^{s^\star-s}. \]
where $r$ is a constant that depends only on $(\alpha, \gamma, \sigma^2)$.  Note that 
\[ \frac{\binom{s^\star}{s} \binom{p}{s^\star}}{\binom{p}{s}} = \binom{p-s}{p-s^\star} = \binom{p-s}{s^\star-s} \leq p^{s^\star-s}. \]
Then the summation can be bounded above by
\[ \sum_{s < s^\star} (z cp^{a+1-M})^{s^\star - s} \leq p^{a+1-M} \times O(1), \]
where the inequality follows from the formula for partial sums of a geometric series.  Since $M > a+1$, the upper bound vanishes, completing the proof.

\section{Justification of claims in Remark~\ref{re:minimax}}
\label{S:claims}


Consider a prior $\tilde \pi$ for the model $S$ of the form 
\[ \tilde \pi(S) = (1 - w_n) \pi(S) + w_n \delta_{S_0}(S), \]
where $\pi$ is a prior on models of size $|S| \leq R$, for $R=\min\{n, \text{rank}(X)\}$, $S_0$ is a fixed model with $|S_0|=R$ and $\text{span}(X_{S_0}) = \text{span}(X)$, and $w_n = e^{-rR}$ for $r > 0$ to be determined; a similar setup is taken in \citet[][Sec.~5.10]{gao.vdv.zhou.2015}, and the choice $w_n \equiv 1/2$ is considered in \citet{rigollet.tsybakov.2011}, Equation~(3.4).  With the prior $\tilde \pi$, it is easy to see that the denominator $D_n$ of the posterior satisfies
\[ D_n \geq \tfrac12 \max\bigl\{ \pi(S^\star) g(S^\star), w_n g(S_0) \bigr\} \]
for sufficiently large $n$ (so that $1-w_n > \frac12$, say), where 
\[ g(S) = e^{\frac{\alpha}{2\sigma^2} \{\|X_{S^\star}(\hat\beta_{S^\star} - \beta_{S^\star}^\star)\|^2 + \|Y-X_{S^\star}\hat\beta_{S^\star}\|^2 - \|Y-X_S \hat\beta_S\|^2\}} \Bigl( 1 + \frac{\alpha}{\gamma \sigma^2} \Bigr)^{-|S|/2}. \]
For the case $S=S^\star$, in the first term of the maximum, we have 
\[ g(S^\star) = e^{\frac{\alpha}{2\sigma^2} \|X_{S^\star}(\hat\beta_{S^\star} - \beta_{S^\star}^\star)\|^2} \Bigl( 1 + \frac{\alpha}{\gamma \sigma^2} \Bigr)^{-|S^\star|/2}, \]
just like in the proof of Lemma~\ref{lem:denominator}.  Hence, 
\[ g(S^\star) \geq \Bigl( 1 + \frac{\alpha}{\gamma \sigma^2} \Bigr)^{-|S^\star|/2}. \]
For the second term, since $|S| \leq R$, we have 
\[ g(S_0) \geq e^{\frac{\alpha}{2\sigma^2} \{\|X_{S^\star}(\hat\beta_{S^\star} - \beta_{S^\star}^\star)\|^2 + \|Y-X_{S^\star}\hat\beta_{S^\star}\|^2 - \|Y-X_{S_0} \hat\beta_{S_0}\|^2\}} \Bigl( 1 + \frac{\alpha}{\gamma \sigma^2} \Bigr)^{-R/2}. \]
Since the span of $X_{S_0}$ contains that of $X_{S^\star}$, by assumption, we have that  
\[ \|Y-X_{S^\star}\hat\beta_{S^\star}\|^2 \geq \|Y-X_{S_0} \hat\beta_{S_0}\|^2 \]
and, consequently, the term in the exponent above is bigger than $\frac{\alpha}{2\sigma^2}\|X_{S^\star}(\hat\beta_{S^\star} - \beta_{S^\star}^\star)\|^2 $, which is obviously positive.  Therefore, the second term in the maximum is 
\[ \geq w_n e^{-cR} e^{\frac{\alpha2} \|X_{S^\star}(\hat\beta_{S^\star} - \beta_{S^\star}^\star)\|^2} > w_n e^{-c R} = e^{-}{2\sigma^(r+c) R}, \]
where $c=\frac12 \log(1 + \frac{\alpha}{\gamma\sigma^2})$ is as in Lemma~\ref{lem:denominator}.  Finally, for $A=r+c$, we have 
\[ D_n \geq \tfrac12 \max\{\pi(S^\star)e^{-c|S^\star|}, e^{-A R}\} \]
with probability~1, for large $n$, as desired.  

We claim that, with this new prior $\tilde\pi$, the posterior can achieve the minimax rate for the prediction loss under both the ordinary and ultra high-dimensional regimes.  That is, we get the optimal rate 
\[ \eps_n = \min\bigl\{R, s^\star \log(p / s^\star)\}. \]
This follows easily from the denominator bound discussed above, so long as our current numerator bound from Lemma~\ref{lem:numerator} also holds for the new prior.  The majority of the proof of Lemma~\ref{lem:numerator} has nothing to do with the model prior, so we can immediately jump to the following conclusion:
\[ \E_{\beta^\star}(N_n) \leq e^{-k \eps_n} \sum_{S: |S| \leq R} \phi^{|S|} \tilde \pi(S), \]
where $d$ and $\phi$ are as in the proof of Lemma~\ref{lem:numerator}. Now, for the weighted average part, we have 
\[ \sum_S \phi^{|S|} \tilde \pi(S) \leq \sum_S \phi^{|S|} \pi(S) + w_n \phi^R. \]
The first term in this upper bound is just like that in the proof of Lemma~\ref{lem:numerator}, so we have a handle on this.  We need to choose $w_n$ in such a way that the second term is also controlled.  Since $w_n = e^{-r R}$ for some $r > 0$, it follows that we need $r \geq \log \phi$.  With this choice, the optimal minimax rate can be achieved in both ordinary and ultra high-dimensional regimes.  

We claimed in Remark~\ref{re:minimax} that there is a price to be paid, in terms of model selection performance, if one uses the prior $\tilde\pi$ discussed above.  The problem is that the weight $w_n$ assigned to the large model $S_0$, with $|S_0|=R > s^\star$, is considerably larger than the weight $(1-w_n)\pi(S^\star)$ assigned to the true model $S^\star$.  Then the corresponding posterior mass assigned to $S_0$ is too large, large enough to pull the posterior away from the true model, leading to inconsistency.

\ifthenelse{1=0}{
\bibliographystyle{apalike}
\bibliography{/Users/rgmartin/Dropbox/Research/mybib}

\begin{thebibliography}{}

\bibitem[Abramovich and Grinshtein, 2010]{abramovich.grinshtein.2010}
Abramovich, F. and Grinshtein, V. (2010).
\newblock M{AP} model selection in {G}aussian regression.
\newblock {\em Electron. J. Stat.}, 4:932--949.

\bibitem[Arias-Castro and Lounici, 2014]{ariascastro.lounici.2014}
Arias-Castro, E. and Lounici, K. (2014).
\newblock Estimation and variable selection with exponential weights.
\newblock {\em Electron. J. Stat.}, 8(1):328--354.

\bibitem[Barbieri and Berger, 2004]{barbieri.berger.2004}
Barbieri, M.~M. and Berger, J.~O. (2004).
\newblock Optimal predictive model selection.
\newblock {\em Ann. Statist.}, 32(3):870--897.

\bibitem[Barron and Cover, 1991]{barron.cover.1991}
Barron, A.~R. and Cover, T.~M. (1991).
\newblock Minimum complexity density estimation.
\newblock {\em IEEE Trans. Inform. Theory}, 37(4):1034--1054.

\bibitem[Bondell and Reich, 2012]{bondell.reich.2012}
Bondell, H.~D. and Reich, B.~J. (2012).
\newblock Consistent high-dimensional {B}ayesian variable selection via
  penalized credible regions.
\newblock {\em J. Amer. Statist. Assoc.}, 107(500):1610--1624.

\bibitem[B{\"u}hlmann and van~de Geer, 2011]{buhlmann.geer.book}
B{\"u}hlmann, P. and van~de Geer, S. (2011).
\newblock {\em Statistics for High-Dimensional Data}.
\newblock Springer Series in Statistics. Springer, Heidelberg.

\bibitem[Candes and Tao, 2007]{candes.tao.2007}
Candes, E. and Tao, T. (2007).
\newblock The {D}antzig selector: Statistical estimation when {$p$} is much
  larger than {$n$}.
\newblock {\em Ann. Statist.}, 35(6):2313--2351.

\bibitem[Castillo et~al., 2015]{castillo.schmidt.vaart.2014}
Castillo, I., Schmidt-Hieber, J., and van~der Vaart, A. (2015).
\newblock {B}ayesian linear regression with sparse priors.
\newblock {\it Ann. Statist.}, to appear, {\tt arXiv:1403.0735}.

\bibitem[Castillo and van~der Vaart, 2012]{castillo.vaart.2012}
Castillo, I. and van~der Vaart, A. (2012).
\newblock Needles and straw in a haystack: posterior concentration for possibly
  sparse sequences.
\newblock {\em Ann. Statist.}, 40(4):2069--2101.

\bibitem[Chen and Chen, 2008]{chen.chen.2008}
Chen, J. and Chen, Z. (2008).
\newblock Extended {B}ayesian information criteria for model selection with
  large model spaces.
\newblock {\em Biometrika}, 95(3):759--771.

\bibitem[Clyde and George, 2004]{clydegeorge2004}
Clyde, M. and George, E.~I. (2004).
\newblock Model uncertainty.
\newblock {\em Statist. Sci.}, 19(1):81--94.

\bibitem[Dalalyan and Tsybakov, 2008]{dalalyan.tsybakov.2008}
Dalalyan, A.~S. and Tsybakov, A.~B. (2008).
\newblock Aggregation by exponential weighting, sharp {PAC}-{B}ayesian bounds,
  and sparsity.
\newblock {\em Machine Learning}, 72(1-2):39--61.

\bibitem[Fan and Li, 2001]{fanli2001}
Fan, J. and Li, R. (2001).
\newblock Variable selection via nonconcave penalized likelihood and its oracle
  properties.
\newblock {\em J. Amer. Statist. Assoc.}, 96(456):1348--1360.

\bibitem[Fan and Lv, 2010]{fan.lv.2010}
Fan, J. and Lv, J. (2010).
\newblock A selective overview of variable selection in high dimensional
  feature space.
\newblock {\em Statist. Sinica}, 20(1):101--148.

\bibitem[Gao et~al., 2015]{gao.vdv.zhou.2015}
Gao, C., van~der Vaart, A.~W., and Zhou, H.~H. (2015).
\newblock A general framework for {B}ayes structured linear models.
\newblock Unpublished manuscript, {\tt arXiv:1506.02174}.

\bibitem[George and McCullogh, 1993]{george.mccullogh.1993}
George, E.~I. and McCullogh, R.~E. (1993).
\newblock Variable selection via gibbs sampling.
\newblock {\em J. Amer. Statist. Assoc.}, 88:881--889.

\bibitem[Ghosal et~al., 2000]{ggv2000}
Ghosal, S., Ghosh, J.~K., and van~der Vaart, A.~W. (2000).
\newblock Convergence rates of posterior distributions.
\newblock {\em Ann. Statist.}, 28(2):500--531.

\bibitem[Gr\"unwald and van Ommen, 2014]{grunwald.ommen.2014}
Gr\"unwald, P. and van Ommen, T. (2014).
\newblock Inconsistency of {B}ayesian inference for misspecified linear models,
  and a proposal for repairing it.
\newblock Unpublished manuscript, {\tt arXiv:1412.3730}.

\bibitem[Heaton and Scott, 2010]{heaton.scott.2009}
Heaton, M.~J. and Scott, J.~G. (2010).
\newblock Bayesian computation and the linear model.
\newblock In Cheh, M.-H., Dey, D., M{\"u}ller, P., Sun, D., and Ye, K.,
  editors, {\em Frontiers of Statistical Decision Making and Bayesian
  Analysis}, pages 527--545. Springer.

\bibitem[Ishwaran and Rao, 2005a]{ishwaran.rao.2005.jasa}
Ishwaran, H. and Rao, J.~S. (2005a).
\newblock Spike and slab gene selection for multigroup microarray data.
\newblock {\em J. Amer. Statist. Assoc.}, 100(471):764--780.

\bibitem[Ishwaran and Rao, 2005b]{ishwaran.rao.2005.aos}
Ishwaran, H. and Rao, J.~S. (2005b).
\newblock Spike and slab variable selection: frequentist and {B}ayesian
  strategies.
\newblock {\em Ann. Statist.}, 33(2):730--773.

\bibitem[James and Radchenko, 2009]{james.radchenko.2009}
James, G.~M. and Radchenko, P. (2009).
\newblock A generalized {D}antzig selector with shrinkage tuning.
\newblock {\em Biometrika}, 96(2):323--337.

\bibitem[James et~al., 2009]{james.radchenko.lv.2009}
James, G.~M., Radchenko, P., and Lv, J. (2009).
\newblock D{ASSO}: connections between the {D}antzig selector and lasso.
\newblock {\em J. R. Stat. Soc. Ser. B Stat. Methodol.}, 71(1):127--142.

\bibitem[Jiang, 2007]{jiang2007}
Jiang, W. (2007).
\newblock Bayesian variable selection for high dimensional generalized linear
  models: convergence rates of the fitted densities.
\newblock {\em Ann. Statist.}, 35(4):1487--1511.

\bibitem[Jiang and Tanner, 2008]{jiang.tanner.2008}
Jiang, W. and Tanner, M.~A. (2008).
\newblock Gibbs posterior for variable selection in high-dimensional
  classification and data mining.
\newblock {\em Ann. Statist.}, 36(5):2207--2231.

\bibitem[Johnson and Rossell, 2012]{johnson.rossell.2012}
Johnson, V.~E. and Rossell, D. (2012).
\newblock Bayesian model selection in high-dimensional settings.
\newblock {\em J. Amer. Statist. Assoc.}, 107(498):649--660.

\bibitem[Martin and Walker, 2014]{martin.walker.eb}
Martin, R. and Walker, S.~G. (2014).
\newblock Asymptotically minimax empirical {B}ayes estimation of a sparse
  normal mean vector.
\newblock {\em Electron. J. Stat.}, 8(2):2188--2206.

\bibitem[Narisetty and He, 2014]{narisetty.he.2014}
Narisetty, N.~N. and He, X. (2014).
\newblock Bayesian variable selection with shrinking and diffusing priors.
\newblock {\em Ann. Statist.}, 42(2):789--817.

\bibitem[Park and Casella, 2008]{park.casella.2008}
Park, T. and Casella, G. (2008).
\newblock The {B}ayesian lasso.
\newblock {\em J. Amer. Statist. Assoc.}, 103(482):681--686.

\bibitem[Reid et~al., 2014]{reid.tibshirani.friedman.2014}
Reid, S., Tibshirani, R., and Friedman, J. (2014).
\newblock A study of error variance estimation in lasso regression.
\newblock Unpublished manuscript, {\tt arXiv:1311.5274}.

\bibitem[Rigollet and Tsybakov, 2011]{rigollet.tsybakov.2011}
Rigollet, P. and Tsybakov, A. (2011).
\newblock Exponential screening and optimal rates of sparse estimation.
\newblock {\em Ann. Statist.}, 39(2):731--771.

\bibitem[Rigollet and Tsybakov, 2012]{rigollet.tsybakov.2012}
Rigollet, P. and Tsybakov, A.~B. (2012).
\newblock Sparse estimation by exponential weighting.
\newblock {\em Statist. Sci.}, 27(4):558--575.

\bibitem[Shen and Wasserman, 2001]{shen.wasserman.2001}
Shen, X. and Wasserman, L. (2001).
\newblock Rates of convergence of posterior distributions.
\newblock {\em Ann. Statist.}, 29(3):687--714.

\bibitem[Tibshirani, 1996]{tibshirani1996}
Tibshirani, R. (1996).
\newblock Regression shrinkage and selection via the lasso.
\newblock {\em J. Roy. Statist. Soc. Ser. B}, 58(1):267--288.

\bibitem[Walker and Hjort, 2001]{walker.hjort.2001}
Walker, S. and Hjort, N.~L. (2001).
\newblock On {B}ayesian consistency.
\newblock {\em J. R. Stat. Soc. Ser. B Stat. Methodol.}, 63(4):811--821.

\bibitem[Walker et~al., 2007]{walker2007}
Walker, S.~G., Lijoi, A., and Pr{\"u}nster, I. (2007).
\newblock On rates of convergence for posterior distributions in
  infinite-dimensional models.
\newblock {\em Ann. Statist.}, 35(2):738--746.

\bibitem[{Wellcome Trust}, 2007]{wellcome2007}
{Wellcome Trust} (2007).
\newblock Genome-wide association study of 14,000 cases of seven common
  diseases and 3000 shared controls.
\newblock {\em Nature}, 447:661--678.

\bibitem[Zellner, 1986]{zellner1986}
Zellner, A. (1986).
\newblock On assessing prior distributions and {B}ayesian regression analysis
  with {$g$}-prior distributions.
\newblock In {\em Bayesian inference and decision techniques}, volume~6 of {\em
  Stud. Bayesian Econometrics Statist.}, pages 233--243. North-Holland,
  Amsterdam.

\bibitem[Zhang and Huang, 2008]{zhang.huang.2008}
Zhang, C.-H. and Huang, J. (2008).
\newblock The sparsity and bias of the {LASSO} selection in high-dimensional
  linear regression.
\newblock {\em Ann. Statist.}, 36(4):1567--1594.

\bibitem[Zhang, 2006]{zhang2006}
Zhang, T. (2006).
\newblock From {$\epsilon$}-entropy to {KL}-entropy: analysis of minimum
  information complexity density estimation.
\newblock {\em Ann. Statist.}, 34(5):2180--2210.

\bibitem[Zou, 2006]{zou2006}
Zou, H. (2006).
\newblock The adaptive lasso and its oracle properties.
\newblock {\em J. Amer. Statist. Assoc.}, 101(476):1418--1429.

\bibitem[Zou and Hastie, 2005]{zou.hastie.2005}
Zou, H. and Hastie, T. (2005).
\newblock Regularization and variable selection via the elastic net.
\newblock {\em J. R. Stat. Soc. Ser. B Stat. Methodol.}, 67(2):301--320.

\end{thebibliography}
}
{
}

\pagebreak

\subsection*{Corrections}

Since the paper has been published, a few relatively minor missteps have been identified.  These do not significantly affect the results presented in the paper, but they are worth pointing out here for the sake of readers trying to follow along with the arguments.  We have chosen to keep the original (published) version of the paper as is, but just add a section here to record and correct these missteps.

\subsubsection*{C1. \; Posterior for the model}

The formula for $\pi^n(S)$, the marginal posterior for the model $S$, given in Section~4.1 of the paper has a small typo.  In particular, the formula should read 
\[ \pi^n(S) \propto \pi(S) e^{-\frac{\alpha}{2\sigma^2} \|Y-\hat Y_S\|^2} \Bigl( 1 + \frac{\alpha}{\gamma \sigma^2} \Bigr)^{-|S|/2}. \]
After making this correction, it now becomes clear that the choice ``$\gamma=0$'' used in some of the numerical illustrations is not a feasible one, although that extreme choice was only shown to demonstrate that there is not a singularity in performance at the boundary.  This typo also made it into the R codes used to compute the method, but it has since been confirmed that the numerical results presented in the paper (for small but non-zero $\gamma$) hold virtually unchanged with the correct formula for $\pi^n(S)$ above.  The corrected R codes are now available at \url{www4.stat.ncsu.edu/~rmartin}.  The authors thanks two PhD students---Mr.~Chang Liu and Ms.~Yue Yang---at NC State for spotting this mistake and redoing the simulations with the correct posterior.

\subsubsection*{C2. \; Strengthening the result in Theorem~\ref{thm:selection1}}

Theorem~\ref{thm:selection1} requires that $R/p^a \to 0$, where $a$ is a parameter involved in the complexity prior.  This can be a limitation since it may only hold when $p$ is much larger than $n$.  However, in the proof of Theorem~\ref{thm:selection1}, one can employ the dimensionality result in Theorem~\ref{thm:dim} to reduce the size of configurations, $S$, under consideration.  As a result, Theorem~\ref{thm:selection1} holds under the considerably weaker assumption that $s^\star / p^a \to 0$.

\subsubsection*{C3. \; Model selection consistency}

Theorem~4 in the paper shows that the posterior mass assigned to supersets of the true model $S^\star$ is vanishing, and the proof of Theorem~5 shows the same result for subsets of $S^\star$.  But model selection consistency requires that the mass assigned to {\em all models not equal to $S^\star$} must be vanishing.  This model selection consistency does hold, but it requires some inconsequential changes to the conditions of Theorem~5.  In particular, note that the lower bound in the new beta-min condition below only differs from that in Equation~(18) in the paper by constants, not by rate.  The authors thank Dr.~Kyoungjae Lee at the University of Notre Dame for pointing out the gap in our original proof.  

\begin{thm0}
Take any $\beta^\star$ with $S^\star=S_{\beta^\star}$ such that $|S^\star|=s^\star$ and 
\[ \min_{j \in S^\star} |\beta_j^\star| \geq \rho_n := \frac{\sigma}{\kappa(C's^\star)} \Bigl\{ \frac{2M}{\alpha(1-\alpha)} \log p \Bigr\}^{1/2}, \]
where $C'=1+C$ for $C$ as in Theorem~2 and $M$ is a constant with $M > 1 + a$, with $a > 0$ the power in the complexity prior.  This implicitly assumes that $\kappa(C'|S^\star|) > 0$.  Then 
\[ \E_{\beta^\star}\{\Pi^n(\beta: S_\beta = S^\star)\} \to 1. \]
\end{thm0}

\begin{proof}
In light of Theorem~4, it suffices to show that $\E_{\beta^\star}\{\Pi^n(\beta: S_\beta \not\supseteq S^\star)\} \to 0$.  To start, take a generic $S \not\supseteq S^\star$; by Theorem~2, it suffices to consider only those $S$ for which $|S| \leq C|S^\star|$.  Then we have  
\[ \|Y-\hat Y_{S^\star}\|^2 - \|Y - \hat Y_S\|^2 = Y^\top(P_S - P_{S^\star}) Y, \]
and if we plug in $Y=X\beta^\star + \sigma \eps$, where $\eps \sim \nm_n(0,I)$, then we get 
\[ -\|(I-P_S)X\beta^\star\|^2 - 2 \sigma \eps^\top (I-P_S)X\beta^\star + \sigma^2 \eps^\top (P_S - P_{S^\star}) \eps. \]
Bound the right-most quadratic form above as follows 
\begin{align*}
\eps^\top (P_S - P_{S^\star}) \eps & = \eps^\top (P_S - P_{S \cap S^\star}) \eps - \eps^\top (P_{S^\star}-P_{S \cap S^\star}) \eps \\
& \leq \eps^\top (P_S - P_{S \cap S^\star}) \eps,
\end{align*}
which follows since $P_{S^\star} - P_{S \cap S^\star}$ is positive definite.  This implies that, up to a multiplicative constant, the exponent on the right-hand side of Equation~(17) in the paper is bounded above by
\[ -\|(I-P_S)X\beta^\star\|^2 - 2 \sigma \eps^\top (I-P_S)X\beta^\star + \sigma^2 \eps^\top (P_S - P_{S \cap S^\star}) \eps. \]
The key observation now is that $(I-P_S)(P_S - P_{S \cap S^\star})=0$, which implies that 
\[ \eps^\top (I-P_S)X\beta^\star \quad \perp \quad \eps^\top (P_S - P_{S \cap S^\star}) \eps. \]
We also know the marginal distributions are normal and chi-square, respectively, and using their moment generating functions, we get  
\[ \E_{\beta^\star}\bigl[ e^{\frac{\alpha}{2\sigma^2}\{\|Y-\hat Y_{S^\star}\|^2 - \|Y-\hat Y_S\|^2\}} \bigr] \leq (1-\alpha)^{-\frac12 (|S|-|S \cap S^\star|)} e^{-\frac{\alpha(1-\alpha)}{2\sigma^2} \|(I-P_S)X\beta^\star\|^2}. \]
Since 
\[ \|(I-P_S) X\beta^\star\|^2 = \|(I-P_S) X_{S^\star \cap S^c} \beta_{S^\star \cap S^c}^\star\|^2, \]
it follows from Lemma~5 of Arias-Castro and Lounici (2014) that 
\[ \|(I-P_S) X\beta^\star\|^2 \geq \kappa(|S^\star \cup S|)^2 (|S^\star|-|S \cap S^\star|) \rho_n^2. \]
Next, for $C'$ as above, we have $|S \cup S^\star| \leq |S| + |S^\star| \leq C'|S^\star|$ which, together with monotonicity of $\kappa$ and the definition of $\rho_n$ in the beta-min condition, gives 
\[ \tfrac{\alpha(1-\alpha)}{2\sigma^2} \|(I-P_S)X\beta^\star\|^2 \geq M (|S^\star|-|S \cap S^\star|) \log p. \]
Therefore, 
\[ \E_{\beta^\star}\{\pi^n(S)\} \leq \frac{\pi(S)}{\pi(S^\star)} v^{|S^\star|-|S|} (wp^{-M})^{|S^\star|-|S \cap S^\star|}, \]
where $w=(1-\alpha)^{-1/2}$.  Plug in the complexity prior and then sum over all $S$ such that $S \not\supseteq S^\star$ (and of size no larger than $Cs^\star$, by Theorem~2) to get 
\[ \E_{\beta^\star} \{ \Pi^n(\beta: S_\beta \not\supseteq S^\star) \} \leq \sum_{s=0}^{Cs^\star} \sum_{t=1}^{\min(s,s^\star)} \frac{\binom{s^\star}{t} \binom{p-s^\star}{s-t} \binom{p}{s^\star}}{\binom{p}{s}} (vcp^a)^{s^\star-s} (wp^{-M})^{s^\star-t}. \]
Here $t$ represents the number of variables in both $S$ and $S^\star$ and, since $S \not\supseteq S^\star$, $t$ must be less than $s^\star$.  The ratio of binomial coefficients simplifies as 
\[ \frac{\binom{s^\star}{t} \binom{p-s^\star}{s-t} \binom{p}{s^\star}}{\binom{p}{s}} = \binom{s}{t} \binom{p-s}{s^\star - t} \leq s^{s-t} p^{s^\star - t}. \] 
Plug in this bound and split the sum over $s$ into two cases: $s \leq s^\star - 1$ and $s \geq s^\star$.  For the first case, since $M > 1 + a$, we have 
\[ \sum_{s=0}^{s^\star-1} \sum_{t=0}^s (vcp^a/s)^{s^\star-s} (wsp^{1-M})^{s^\star - t} \lesssim \sum_{s=0}^{s^\star-1} (vcp^{1+a-M})^{s^\star-s} \to 0. \]
Similarly, for the second case
\[ \sum_{s=s^\star}^{Cs^\star} \sum_{t=0}^{s^\star-1} (vcp^a/s)^{s^\star-s} (wsp^{1-M})^{s^\star-t} \lesssim s^\star p^{1-M} \sum_{s=s^\star}^{Cs^\star} (vcp^a/s)^{s^\star-s}, \]
and, since $p^a \gg R \gg s^\star$, the sum is dominated by $p^{1-M} \to 0$.  In either case, the upper bound vanishes which implies that $\E_{\beta^\star} \{ \Pi^n(\beta: S_\beta \not\supseteq S^\star) \} \to 0$ as was to be shown.  
\end{proof}

\end{document}